\documentclass{amsart}
\usepackage{amsmath}
\usepackage{amsthm}
\usepackage{amsfonts,mathrsfs}
\usepackage[flushleft]{threeparttable}
\usepackage{amssymb}
\usepackage{graphicx}
\usepackage{enumitem}
\usepackage{color}
\usepackage{verbatim}
\usepackage[foot]{amsaddr}

\theoremstyle{plain}
\newtheorem{theorem}{Theorem}[section]
\newtheorem{lemma}[theorem]{Lemma}

\newtheorem{corollary}[theorem]{Corollary}

\theoremstyle{definition}


\newtheoremstyle{TheoremNum}
	{\topsep}{\topsep}              
  {\itshape}                      
  {}                              
  {\bfseries}                     
  {.}                             
  { }                             
  {\thmname{#1}\thmnote{ \bfseries #3}}
\newtheorem{remark}{Remark}

\newcommand{\F}{\mathbb F}
\newcommand{\K}{\mathbb K}
\newcommand{\Z}{\mathbb Z}

\newcommand{\cN}{\mathcal N}

\newcommand{\cG}{\mathcal G}

\newcommand{\cH}{\mathcal H}
\newcommand{\cS}{\mathcal S}

\newcommand{\cT}{\mathcal T}

\newcommand{\bv}{\mathbf v}

\newcommand{\cC}{\mathcal C}
\newcommand{\U}{\mathcal U}
\newcommand{\rC}{\mathscr C}

\newcommand{\Aut}{\mathrm{Aut}}
\newcommand{\rank}{\mathrm{rk}}
\newcommand{\End}{\mathrm{End}}

\newcommand{\GL}{\mathrm{GL}}

\newcommand{\lp}[2]{\mathscr{L}_{(#1,#2)}}
\newcommand{\RN}[1]{%
  \textup{\uppercase\expandafter{\romannumeral#1}}%
}
\newcommand{\rn}[1]{%
  \textup{\lowercase\expandafter{\romannumeral#1}}%
}

 \def\zhou#1 {\fbox {\footnote {\ }}\ \footnotetext { From Yue: {\color{red}#1}}}

 \def\rocco#1 {\fbox {\footnote {\ }}\ \footnotetext { From Rocco: {\color{blue}#1}}}

\begin{document}
	\title[Nuclei and automorphism groups]{Nuclei and automorphism groups of generalized twisted Gabidulin codes}
	\author[R. Trombetti]{Rocco Trombetti\textsuperscript{\,1}}
	\address{\textsuperscript{1}Dipartimento di Mathematica e Applicazioni ``R. Caccioppoli", Universit\`{a} degli Studi di Napoli ``Federico \RN{2}", I-80126 Napoli, Italy}
	\email{rtrombet@unina.it}
	\author[Y. Zhou]{Yue Zhou\textsuperscript{\,2,$\dagger$}}
	\address{\textsuperscript{2}College of Liberal Arts and Sciences, National University of Defense Technology, 410073 Changsha, China}
	\address{\textsuperscript{$\dagger$}Corresponding Author}
	\email{yue.zhou.ovgu@gmail.com}
	\date{\today}
	
	\begin{abstract}
		Generalized twisted Gabidulin codes are one of the few known families of maximum rank metric codes over finite fields. As a subset of $m\times n$ matrices, when $m=n$, the automorphism group of any generalized twisted Gabidulin code has been completely determined by the authors in \cite{lunardon_generalized_2018}. In this paper, we consider the same problem for $m<n$. Under certain conditions on their parameters, we determine their middle nuclei and right nuclei, which are important invariants with respect to the equivalence for rank metric codes. Furthermore, we also use them to derive necessary conditions on the automorphisms of generalized twisted Gabidulin codes.
	\end{abstract}
	\keywords{Rank-metric code; maximum rank-distance code; automorphism; semifield; nucleus}
	\subjclass[2010]{51E20, 05B25, 51E22}
\maketitle

\section{Introduction}
Let $\K$ be a field. The set $\K^{m\times n}$ of all $m\times n$ matrices over $\K$ is a $\K$-vector space. The \emph{rank-metric distance} on $\K^{m\times n}$ is defined by 
\[d(A,B)=\mathrm{rk}(A-B) \,\, \text{for} \,\, A,B\in \K^{m\times n},\]
where $\rank(C)$ stands for the rank of $C$.

A subset $\cC\subseteq \K^{m\times n}$ is called a \emph{rank metric code}. The \emph{minimum distance} of $\cC$ is
\[d(\cC)=\min_{A,B\in \cC, A\neq B} \{d(A,B)\}.\]
When $\cC$ is a $\K$-linear subspace of $\K^{m\times n}$, we say that $\cC$ is a $\K$-linear code and its dimension $\dim_{\K}(\cC)$ is defined to be the dimension of $\cC$ as a subspace over $\K$.

Two rank metric codes $\cC_1$ and $\cC_2\subseteq \K^{m\times n}$ are \textit{equivalent} if there are $A\in \GL(m,\K)$, $B\in \GL(n,\K)$, $C\in \K^{m\times n}$ and $\gamma \in \Aut(\K)$ such that 
\begin{equation}\label{eq:equivalence_def}
	\cC_2=\{AX^{\gamma}B + C \mid X \in \cC_1\}.
\end{equation}  
In particular, when $\cC_1$ and $\cC_2$ are $\K$-linear, we can always let $C$ to be the zero matrix. All the equivalence mappings of a rank metric code $\cC$ form its \emph{automorphism group}, which is denoted by $\Aut(\cC)$.

There is another equivalence relation on rank metric codes called \emph{isometry} introduced in  \cite{de_la_cruz_algebraic_2016}. When $m\neq n$, the equivalence of rank metric codes is the same as the isometry. However, when $m=n$, we say that $\cC_1$ is \emph{isometric} to $\cC_2$ if $\cC_1$ is equivalent to $\cC_2$ or $\cC_2^{\top}$, where  $\cC^{\top}_2:=\{X^t \mid X \in \cC_2\},$ is the so called \emph{adjoint code} of $\cC_2$.

In this article we will be interested in the case $\K=\F_{q}$. Let $\cC$ be a rank metric code in $\F_q^{m\times n}$. When $d(\cC)=d$, it is well-known that
$$\#\cC\le q^{\max\{m,n\}(\min\{m,n\}-d+1)},$$
which is a $q$-analog of the Singleton bound for the rank metric distance; see \cite{delsarte_bilinear_1978}. When equality holds, we call $\cC$ a \emph{maximum rank distance} (MRD for short) code.

The main application of rank metric codes is in the construction of error correcting codes for {\it random network coding} \cite{koetter_coding_2008}. Nonetheless, there are several interesting structures in finite geometry, such as quasifields, splitting dimensional dual hyperovals and maximum scattered subspaces, which can be equivalently described as rank metric codes; see \cite{csajbok_maximum_2017,dempwolff_orthogonal_2015,johnson_handbook_2007,lunardon_kernels_2017,sheekey_rank-metric_arxiv,yoshiara_dimensional_2006}. For instance, the spreadset derived from a quasifield of order $q^n$ is an MRD code in $\F_q^{n\times n}$ and its minimum distance is $n$.

For MRD codes with minimum distance less than $\min\{m,n\}$, there are a few known constructions. The first and most famous family is due to Gabidulin \cite{gabidulin_MRD_1985} and Delsarte \cite{delsarte_bilinear_1978} who found it independently. This family is later generalized by Kshevetskiy and Gabidulin in \cite{kshevetskiy_new_2005}, and we often call them \emph{generalized Gabidulin codes}. 

Recent constructions of MRD codes can be found in \cite{cossidente_non-linear_2016,csajbok_newMRD_2017,csajbok_new_maximum_2018,csajbok_maximum_4_2018,durante_nonlinear_MRD_2016,neri_genericity_2018,otal_explicit_2016,otal_additive_2016,sheekey_new_2016,sheekey_new_arxiv,trombetti_new_toappear}. For instance in \cite{sheekey_new_2016}, the author exhibits two infinite families of linear MRD codes which are not equivalent to generalized Gabidulin codes. We call them {\it twisted Gabidulin codes} and {\it generalized twisted Gabidulin} codes. In \cite{lunardon_generalized_2018} it is shown that the latter family contains both generalized Gabidulin codes and twisted Gabidulin codes as proper subsets. Also in \cite{lunardon_generalized_2018}, when $m=n$, the automorphism groups and the equivalence issue for the generalized twisted Gabidulin codes have been completely solved.

In \cite{sheekey_new_2016}, generalized twisted Gabidulin codes are exhibited as a set of {\it linearized polynomials} over $\F_{q^n}$; i.e. as  a subset of the set of polynomials defined as follows: \[\lp{n}{q}[X]=\left\{\sum c_i X^{q^i}: c_i\in \F_{q^n} \right\}.\] 

More precisely, let  $n,k,s,h\in \Z^+$ with $k<n$ and $\gcd(n,s)=1$, and let $\eta$ be in $\F_{q^n}$ such that $N_{q^{sn}/q^s}(\eta)\neq (-1)^{nk}$. A generalized twisted Gabidulin code is the following set of linearized polynomials
\[\cH_{k,s}(\eta, h) = \{a_0 X + a_1 X^{q^s} + \dots +a_{k-1} X^{q^{s(k-1)}} + \eta a_0^{q^h} X^{q^{sk}}: a_0,a_1,\dots, a_{k-1}\in \F_{q^n} \}.\] Indeed each polynomial in this set has at most $q^{k-1}$ roots in $\F_{q^n}$; see \cite{lunardon_generalized_2018,sheekey_new_2016}.

Any polynomial $f$ in $\lp{n}{q}[X]$ gives rise to an $\F_q$-linear map $x\in \F_{q^n} \mapsto f(x) \in \F_{q^n},$ and it is well known that $(\lp{n}{q}[X]/(X^{q^n}-X),+,\circ,\cdot),$ where $+$ is the addition of maps, $\circ$ is the composition of maps and $\cdot$ is the scalar multiplication by elements of $\F_q$, is isomorphic to the algebra of $n\times n$ matrices over $\F_q$ and to $\End_{\F_q}(\F_{q^n})$ which denotes the set of $\F_q$-endomorphisms of $\F_{q^n}$.

Now, let $m\le n$ and let $S=\{\alpha_1,\dots,\alpha_m\}$ be a set made up of $m$ linearly independent elements of $\F_{q^n}$ over $\F_q$. Under a given basis of $\F_{q^n}$ over $\F_q$, each element $a$ of $\F_{q^n}$ can be written as a (column) vector $\bv(a)$ in $\F_{q}^n$. Most of MRD codes with $1<k<n-1$  appeared in the literature so far, are in the following form:
\begin{equation}\label{eq:mn_MRD_cH}
\left\{ \left(\bv(f(\alpha_1)), \dots, \bv(f(\alpha_m))\right)^T: f\in \cH_{k,s}(\eta, h)
  \right\},
\end{equation}
where $(\cdot )^T$ denotes the transpose of a matrix. In this regard, we point out that several new constructions of MRD codes which are not in this form are presented recently in \cite{horlemann-trautmann_new-criteria_2017} and are proved to be inequivalent to any Gabidulin code. However, we do not know whether they are equivalent or not to a generalized twisted Gabidulin code \eqref{eq:mn_MRD_cH}.

When $m<n$, MRD codes defined in \eqref{eq:mn_MRD_cH} can be seen as the image of $\cH_{k,s}(\eta, h)$ under a projection from $\F_q^{n\times n}$ to $\F_q^{m\times n}$. Indeed, let $\xi$ be a primitive element of $\F_{q^n}$. It is clear that 
\[\cH_{k,s}(\eta, h) \cong \left\{ \left(\bv(f(1)),\bv(f(\xi)), \dots, \bv(f(\xi^{n-1}))\right)^T: f\in \cH_{k,s}(\eta, h)
  \right\}.\]
As $\{1, \xi, \cdots, \xi^{n-1} \}$ is a basis of $\F_{q^n}$ over $\F_q$, there exist $a_{ij}\in \F_q$ such that
$\alpha_i = \sum_{j=1}^na_{ij}\xi^{j-1}$ for $i=1,2,\cdots, m$. Let $L=(a_{ij})$. By multiplying $L$ on the left side of elements in $\cH_{k,s}(\eta, h) $, we get \eqref{eq:mn_MRD_cH}. 

In general, it is difficult to tell whether two rank metric codes with the same parameters are equivalent or not. For quasifields, in particular for semifields, there are some classical invariants called {\it kernel}, {\it left}, {\it right} and {\it middle nuclei}. Originally, these are defined as algebraic substructures of quasifields or semifields. However they can also be translated into the language of matrices. For more information on the nuclei of finite semifields, we refer to \cite{marino_nuclei_2012}. These invariants are quite useful in telling the equivalence between two semifields, and many classification results on semifields are also based on certain assumptions on the sizes of their nuclei; see \cite{blokhuis_classification_2003,marino_nuclei_2012,marino_towards_2011,menichetti_kaplansky_1977,menichetti_n-dimensional_1996} for instance. 

It is natural to ask whether some of these substructures can be defined also for other rank metric codes. This is addressed in \cite{lunardon_kernels_2017}, where the kernel, the middle nucleus and the right nucleus are defined for an arbitrary rank metric code. It can proved that the order of all such structures is an invariant with respect to the equivalence relation for $\K$-linear rank metric codes in $\K^{m\times n}$.

In \cite{liebhold_automorphism_2016}, the middle nucleus and the right nucleus of an MRD code defined as in \eqref{eq:mn_MRD_cH} with $\eta=0$, i.e.\ when $\cH_{k,s}(\eta, h)= \cG_{k,s}$, are determined; see \cite{morrison_equivalence_2014} for the calculation of the middle nucleus too. This is a crucial step towards the determination of the automorphism group of these codes in  \cite{liebhold_automorphism_2016}. We point out that in \cite{liebhold_automorphism_2016} the middle nucleus (resp.\ right nucleus) is called the \emph{left idealiser}  (resp.\ \emph{right idealiser}) of the code.

For nonzero $\eta$, if $m=n$, the automorphism group of each generalized twisted Gabidulin code is determined in \cite{lunardon_generalized_2018}. When $m<n$, the determination of the automorphism group appears to be a more complicated task.  In a very recent paper \cite{csajbok_puncturing_2018}, the special case with $m\mid n$ was investigated by using cyclic models for bilinear forms and the automorphism groups are determined.

In this article we investigate the middle nucleus as well as the right nucleus of a generalized twisted Gabidulin code defined as in \eqref{eq:mn_MRD_cH} with $\eta \neq 0$, for $m < n$. Under certain assumptions on the involved parameters $k$, $s$, $m$, $\eta$ and $n$, we can determine these nuclei. Finally, by exploiting these results we can derive necessary conditions on the automorphisms of generalized twisted Gabidulin codes under certain restrictions for the involved parameters.

The rest of this paper is organized as follows: In Section \ref{sec:pre}, we introduce several notation used throughout this paper and translate an MRD code as a set of matrices over $\F_q$ into a set of linearized polynomials in $\F_{q^n}[X]$. In Section \ref{se:nuclei}, we calculate the middle nucleus as well as the right nucleus of a generalized twisted Gabidulin code under certain assumptions. In the end, we use these results to obtain some necessary conditions on the automorphisms of generalized twisted Gabidulin codes.

\section{Preliminaries}\label{sec:pre}
Throughout this paper, we always use $\cS$ to denote a set of $m$ $\F_q$-linearly independent elements in $\F_{q^n}$. Of course $m\leqslant n$. We often use $U_\cS$ to denote the $\F_q$-subspace generated by the elements of $\cS$ in $\F_{q^n}$. 


To investigate rank metric codes in $\F_{q}^{m\times n}$ where $m\leqslant n$, we need to prove several results.
\begin{lemma}\label{le:polynomials_matrices}
	Let $m$, $n$ be in $\Z^+$ satisfying $m\leqslant n$, and let $q$ be a prime power. Let $\cS$ be a subset consisting of $m$ arbitrary $\F_q$-linearly independent elements $\alpha_1,\dots,\alpha_m\in \F_{q^n}$. Define 
	$\theta_\cS:= \prod_{u\in U_\cS} (X-u)$.
	Then we have 
	\[\lp{n}{q}[X] / (\theta_\cS) \cong \left\{\left(\bv(f(\alpha_1)), \dots, \bv(f(\alpha_m))\right)^T: f\in  \lp{n}{q}[X] \right\} .\]
\end{lemma}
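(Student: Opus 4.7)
The plan is to realize the set on the right-hand side as the image of the evaluation map
\[
\phi\colon \lp{n}{q}[X] \longrightarrow \F_q^{m \times n},\qquad f \longmapsto \left(\bv(f(\alpha_1)),\ldots,\bv(f(\alpha_m))\right)^T,
\]
and then identify $\ker \phi$ with the ideal $(\theta_\cS)$, after which the statement follows from the first isomorphism theorem.

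First I would check that $\phi$ is $\F_q$-linear (in fact $\F_{q^n}$-linear in the coefficients of $f$) and that its image equals the set on the right-hand side by construction. The heart of the argument is then the computation of $\ker \phi$. An element $f$ lies in the kernel iff $f(\alpha_i)=0$ for every $i$; since every $f\in \lp{n}{q}[X]$ acts as an $\F_q$-linear endomorphism of $\F_{q^n}$, this is equivalent to $f$ vanishing on the full $\F_q$-span $U_\cS$. Because $|U_\cS|=q^m$ and $\theta_\cS$ has degree $q^m$ with precisely $U_\cS$ as its (necessarily simple) set of roots, $f$ vanishes on $U_\cS$ if and only if $\theta_\cS$ divides $f$ in $\F_{q^n}[X]$; that is, $f\in (\theta_\cS)$. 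Hence $\ker \phi = \lp{n}{q}[X]\cap(\theta_\cS)$.

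The main technical subtlety I anticipate is clarifying the meaning of $\lp{n}{q}[X]/(\theta_\cS)$: one should verify that $\lp{n}{q}[X]\cap(\theta_\cS)$ coincides with the composition-based \emph{symbolic} ideal $\lp{n}{q}[X]\circ\theta_\cS$, so that the quotient is represented by linearized polynomials of $q$-degree strictly less than $m$. This is settled by symbolic Euclidean division: given $f\in \lp{n}{q}[X]$, write $f = g\circ \theta_\cS + r$ with $\deg_q r<m$; if additionally $\theta_\cS\mid f$ as ordinary polynomials, then $r$ vanishes on the $q^m$ points of $U_\cS$ while having at most $q^{m-1}$ roots, forcing $r=0$. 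Consequently each coset has a unique representative of $q$-degree less than $m$, and the $\F_q$-dimension of the quotient equals $mn$, matching that of $\F_q^{m\times n}$.

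Putting these pieces together, the first isomorphism theorem applied to the $\F_q$-linear map $\phi$ yields
\[
\lp{n}{q}[X]/(\theta_\cS) \;=\; \lp{n}{q}[X]/\ker\phi \;\cong\; \mathrm{Im}(\phi),
\]
which is exactly the set displayed on the right-hand side of the lemma.
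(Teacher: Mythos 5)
Your proof is correct and takes essentially the same route as the paper: the evaluation map $f\mapsto(\bv(f(\alpha_1)),\dots,\bv(f(\alpha_m)))^T$, identification of its kernel with the polynomials vanishing on $U_\cS$, i.e.\ with $f\equiv 0 \bmod \theta_\cS$, and the first isomorphism theorem. Your additional verification that ordinary divisibility by $\theta_\cS$ coincides with membership in the symbolic (composition) ideal, via symbolic division and a root count, is a careful clarification of a point the paper leaves implicit, but it does not alter the approach.
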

\begin{proof}
	The map given by
	\[\varphi : f \mapsto  \left(\bv(f(\alpha_1)), \dots, \bv(f(\alpha_m))\right)^T\]
	is clearly surjective and $\F_q$-linear. By noting that $\varphi(f)$ is the zero matrix if and only if $f(x)=0$ for every $x\in U_\cS$, we see that $\ker(\varphi) = \{f\, \in \lp{n}{q}[X]: f \equiv 0\mod{\theta_\cS}\}$. This concludes the proof.
\end{proof}
For the subset $\cS$ made up of $m$ arbitrary $\F_q$-linearly independent elements in $\F_{q^n}$, we define  
\begin{center}
\begin{tabular}{cccc}
	$\pi_\cS$ :& $\lp{n}{q}[X]$ &$\rightarrow$ &$\lp{n}{q}[X]/(\theta_\cS)$,\vspace{0.2cm}\\
		 & $f$            &$\mapsto$     & $f \mod{\theta_\cS}$.
\end{tabular}
\end{center}
In particular, for $U_\cS=\F_{q^n}$, by Lemma \ref{le:polynomials_matrices} we have
\[ \End_{\F_q}(\F_{q^n}) \cong  \lp{n}{q}[X]/(X^{q^n}-X),\]
where  $\End_{\F_q}(\F_{q^n})$ is the set of $\F_q$-endomorphisms of $\F_{q^n}$.

\begin{lemma}\label{le:f=0}
	Let $\cS$ be an $m$-subset $\cS$ of $\F_q$-linearly independent elements in $\F_{q^n}$.	Let $\rC$ be a subset of $\lp{n}{q}[X]$. 
	Assume that for any distinct $f$ and $g\in \rC$, the number of solutions of $f=g$ in $U_\cS$ is strictly smaller than $q^m$. Then $\pi_\cS$ is injective on $\rC$.
\end{lemma}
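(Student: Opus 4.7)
The plan is to argue by contraposition: if $\pi_\cS$ fails to be injective on $\rC$, then some pair of distinct polynomials in $\rC$ agree on all of $U_\cS$, which gives $q^m$ solutions to $f=g$ and contradicts the hypothesis.

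First I would unwind the definition of $\pi_\cS$. Suppose $f, g \in \rC$ are distinct and satisfy $\pi_\cS(f) = \pi_\cS(g)$. Then $f \equiv g \pmod{\theta_\cS}$ in $\lp{n}{q}[X]$, i.e.\ $\theta_\cS$ divides $f-g$ as ordinary polynomials in $\F_{q^n}[X]$.

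Next I would use the explicit factorization of $\theta_\cS$. By definition $\theta_\cS = \prod_{u\in U_\cS}(X-u)$, so $\theta_\cS$ is a separable polynomial whose root set in $\F_{q^n}$ is exactly $U_\cS$, and $|U_\cS| = q^m$ since $\cS$ consists of $m$ elements that are $\F_q$-linearly independent. Therefore $\theta_\cS \mid (f-g)$ forces $(f-g)(u)=0$ for every $u\in U_\cS$, which means that the equation $f=g$ has at least $q^m$ solutions in $U_\cS$.

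This is the contradiction I need: by the hypothesis, the equation $f=g$ has strictly fewer than $q^m$ solutions in $U_\cS$ whenever $f\ne g$ in $\rC$. Hence no such pair $(f,g)$ exists, and $\pi_\cS$ is injective on $\rC$. There is no serious obstacle here; the only thing to be careful about is the passage from $\theta_\cS\mid (f-g)$ in $\lp{n}{q}[X]$ to divisibility in $\F_{q^n}[X]$, which is immediate since $\lp{n}{q}[X] \subseteq \F_{q^n}[X]$ and $\theta_\cS$ is a (not necessarily linearized) polynomial whose roots and multiplicities in $\F_{q^n}$ are completely described.
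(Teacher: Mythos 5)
Your proof is correct and follows essentially the same route as the paper: both reduce the congruence $f\equiv g \pmod{\theta_\cS}$ to the pointwise statement that $f-g$ vanishes on all of $U_\cS$, and then compare the $q^m$ roots this forces with the hypothesis that $f=g$ has fewer than $q^m$ solutions in $U_\cS$. One small aside: $\theta_\cS=\prod_{u\in U_\cS}(X-u)$ is in fact a linearized polynomial (since $U_\cS$ is an $\F_q$-subspace), contrary to your parenthetical remark, but this does not affect your argument.
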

\begin{proof}
	It follows directly from the assumption $\#\{x\in \F_{q^n}: (f-g)(x)=0 \}<q^m=\#U_\cS$ and the fact that $h\equiv 0 \mod{\theta_\cS}$ if and only if $h(u)=0$ for every $u\in U_\cS$.
\end{proof}

By Lemma \ref{le:f=0} and the fact that  $\cG_{m,s}$ is an MRD code, the following result can readily be verified.
\begin{corollary}\label{coro:representation_all}
	Let $\cS$ be an $m$-subset $\cS$ of $\F_q$-linearly independent elements in $\F_{q^n}$. Let $s$ be an integer such that $\gcd(n,s)=1$. Then the set 
	\[\{a_0 X + a_1 X^{q^s} + \dots +a_{m-1} X^{q^{s(m-1)}}: a_i\in \F_{q^n} \}\]
	is a transversal, namely a system of distinct representatives, for the ideal $(\theta_\cS)$ in  $\lp{n}{q}[X]$.
\end{corollary}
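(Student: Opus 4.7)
The plan is to combine injectivity (via Lemma \ref{le:f=0}) with a dimension count. Concretely, I want to show that $\pi_\cS$ maps $\cG_{m,s}:=\{a_0 X + a_1 X^{q^s} + \dots + a_{m-1} X^{q^{s(m-1)}}: a_i \in \F_{q^n}\}$ bijectively onto $\lp{n}{q}[X]/(\theta_\cS)$.

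First I would establish the size of the target. By Lemma \ref{le:polynomials_matrices}, $\lp{n}{q}[X]/(\theta_\cS)$ is isomorphic to the image of $\varphi$. Since $\alpha_1,\dots,\alpha_m$ are $\F_q$-linearly independent, any prescribed assignment $\alpha_i\mapsto \beta_i\in \F_{q^n}$ extends to an $\F_q$-linear endomorphism of $\F_{q^n}$, which in turn (via the isomorphism $\End_{\F_q}(\F_{q^n})\cong \lp{n}{q}[X]/(X^{q^n}-X)$ recalled just after Lemma \ref{le:polynomials_matrices}) is represented by some $f\in \lp{n}{q}[X]$. Hence $\varphi$ is surjective onto $\F_q^{n\times m}$, so
\[\#\bigl(\lp{n}{q}[X]/(\theta_\cS)\bigr) = q^{mn}.\]

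Next I would check injectivity of $\pi_\cS$ restricted to $\cG_{m,s}$. Since $\gcd(n,s)=1$, $\cG_{m,s}$ is a generalized Gabidulin code, hence MRD of minimum rank distance $n-m+1$. Consequently any nonzero $h\in\cG_{m,s}$ has rank at least $n-m+1$ as an $\F_q$-linear endomorphism of $\F_{q^n}$, so $\ker(h)$ has $\F_q$-dimension at most $m-1$ and therefore $h$ has at most $q^{m-1}$ roots in $\F_{q^n}$. In particular, for two distinct $f,g\in \cG_{m,s}$, the equation $f=g$ has fewer than $q^m=\#U_\cS$ solutions in $U_\cS$. Lemma \ref{le:f=0} then gives injectivity of $\pi_\cS$ on $\cG_{m,s}$.

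Finally I would close the argument by counting: $\#\cG_{m,s}=(q^n)^m=q^{mn}$, which matches the size of $\lp{n}{q}[X]/(\theta_\cS)$ computed in the first step. An injection between finite sets of equal cardinality is a bijection, so every element of $\lp{n}{q}[X]$ is congruent modulo $\theta_\cS$ to a unique polynomial of $\cG_{m,s}$, i.e.\ $\cG_{m,s}$ is a transversal for $(\theta_\cS)$. No step here is particularly delicate; the only nonroutine ingredient is the MRD property of $\cG_{m,s}$, which is already used in the paper and provides the crucial bound $q^{m-1}<q^m$ needed to invoke Lemma \ref{le:f=0}.
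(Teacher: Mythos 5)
Your proposal is correct and follows exactly the route the paper intends: the paper leaves the corollary as ``readily verified'' from Lemma \ref{le:f=0} together with the MRD property of $\cG_{m,s}$, and your argument simply spells out that reasoning (injectivity of $\pi_\cS$ on $\cG_{m,s}$ via the root bound $q^{m-1}<q^m$, plus the cardinality count $\#\cG_{m,s}=q^{mn}=\#\bigl(\lp{n}{q}[X]/(\theta_\cS)\bigr)$). No gaps; this is the same approach with the details filled in.
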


Clearly, if an $\F_q$-linear subset $\rC\subseteq \lp{n}{q}[X]$ is  of size $q^{nk}$ and each nonzero polynomial in $\rC$ has at most $q^{k-1}$ roots over $U_\cS$, for instance $\rC= \cH_{k,s}(\eta, h)$, then the assumption on $\rC$ in Lemma \ref{le:f=0} is satisfied.

By Lemmas \ref{le:polynomials_matrices} and \ref{le:f=0}, codes described in \eqref{eq:mn_MRD_cH} can be equivalently written as 
\begin{equation}\label{eq:polynomials_GTGC}
		\pi_\cS(\cH_{k,s}(\eta, h)) = \{f \mod {\theta_\cS}: f\in \cH_{k,s}(\eta, h)\}\subseteq \lp{n}{q}[X]/(\theta_\cS).
\end{equation}
In particular, when $m=n$, it becomes 
$$\{f \mod (X^{q^n}-X): f\in \cH_{k,s}(\eta, h)\}\subseteq \lp{n}{q}[X]/(X^{q^n}-X).$$

In fact, for any subset $\cS$ of $\F_q$-linearly independent elements $\alpha_1, \cdots, \alpha_m$ and any subset $\rC$ of $\lp{n}{q}[X]$, we can always get a rank metric code
\begin{equation}\label{eq:general_code_from_poly}
	\cC=\left\{\left(\bv(f(\alpha_1)), \dots, \bv(f(\alpha_m))\right)^T: f\in  \rC \right\},
\end{equation}
which is an MRD code if and only if $\#\rC = q^{nk}$ and for any distinct $f,g\in  \rC$, $f-g$ has at most $q^{k-1}$ roots in $U_\cS$.

Let $\cC_1$ and $\cC_2  \in \F_q^{m \times n}$ be two rank metric codes. As explained in the previous section if $m\neq n$, isometry and equivalence between  $\cC_1$ and  $\cC_2$  are the same; otherwise $m=n$ and $\cC_1$ is isometric to $\cC_2$ if $\cC_1$ is equivalent to $\cC_2$ or $\cC_2^{\top}$. All the equivalence maps of a rank metric code $\cC$ form its automorphism group, which is denoted by $\Aut(\cC)$.

It is straightforward to verify the following lemma.
\begin{lemma}\label{le:automophism_group}
	Let $\rC$ be a subset of $\lp{n}{q}[X]$, and let $\cS$ be an $m$-subset $\cS$ of $\F_q$-linearly independent elements in $\F_{q^n}$. The automorphism group $\Aut(\cC)$ of $\cC$ defined by \eqref{eq:general_code_from_poly} is isomorphic to the group $\Aut(\pi_\cS(\rC))$ consisting of $(\varphi_1,\varphi_2|_{U_\cS},\rho)$, where  $\rho\in \Aut(\F_{q})$ and $\varphi_1,\varphi_2\in \End_{\F_q}(\F_{q^n})$ satisfy 
	\begin{enumerate}[label=(\alph*)]
		\item 	$\varphi_1(\F_{q^n})=\F_{q^n}$,
		\item	$\varphi_2(U_\cS) = U_\cS$,
		\item	$\pi_\cS(\varphi_1\circ f^\rho\circ \varphi_2)\in \pi_\cS(\rC)  $, for all $f \in \rC$. 
	\end{enumerate}
	Here $\varphi_2|_{U_\cS} = \varphi'_2|_{U_\cS}$ if and only if $\varphi_2(x)=\varphi'_2(x)$ for every $x\in U_\cS$.
\end{lemma}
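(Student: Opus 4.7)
The plan is to take the definition \eqref{eq:equivalence_def} at face value and chase what each of the three data $A,B,\gamma$ does to the matrix representation of a code word, using the identification $\cC \cong \pi_\cS(\rC)$ provided by Lemmas \ref{le:polynomials_matrices} and \ref{le:f=0}. Writing $X_f := (\bv(f(\alpha_1)),\dots,\bv(f(\alpha_m)))^\top$ for $f \in \rC$, an element of $\Aut(\cC)$ is a triple $(A,B,\gamma) \in \GL(m,\F_q) \times \GL(n,\F_q) \times \Aut(\F_q)$ such that for every $f \in \rC$ there exists $g \in \rC$ with $A X_f^\gamma B = X_g$, and my job is to show that such triples are in bijective, composition-preserving correspondence with the triples $(\varphi_1|_{U_\cS},\varphi_2,\rho)$ satisfying (a)--(c).

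I would first unpack the three ingredients separately. Right multiplication by $B$ acts row by row: the $i$-th row becomes $\bv(f(\alpha_i))^\top B = \bv(\varphi_2(f(\alpha_i)))^\top$, where $\varphi_2 \in \End_{\F_q}(\F_{q^n})$ is the endomorphism of $\F_{q^n}$ represented by $B^\top$ in the fixed basis used to define $\bv$; invertibility of $B$ is exactly condition (b). Left multiplication by $A=(a_{ij})$ turns the $i$-th row into $\sum_j a_{ij}\bv(f(\alpha_j))^\top = \bv(f(\sum_j a_{ij}\alpha_j))^\top$ by $\F_q$-linearity of $f$, so $A$ acts on the evaluation points by an $\F_q$-linear bijection $\varphi_1 \colon U_\cS \to U_\cS$; any $\F_q$-linear extension of $\varphi_1$ to $\F_{q^n}$ gives a representative satisfying (a), and only the restriction $\varphi_1|_{U_\cS}$ is visible in $X_f$. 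The entrywise action of $\gamma$ corresponds, through the coordinate map $\bv$, to applying an extension $\rho$ of $\gamma$ to $\F_{q^n}$ coefficientwise on the linearised polynomial $f = \sum a_i X^{q^i}$, producing $f^\rho = \sum \rho(a_i)X^{q^i}$.

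Concatenating the three actions, the map $X_f \mapsto A X_f^\gamma B$ corresponds at the polynomial level to $f \mapsto \varphi_2 \circ f^\rho \circ \varphi_1$ modulo $\theta_\cS$, so stability of $\cC$ translates into condition (c). The assignment $(A,B,\gamma) \mapsto (\varphi_1|_{U_\cS},\varphi_2,\rho)$ is a bijection, because $A$ is determined by $\varphi_1|_{U_\cS}$ through the basis $\{\alpha_1,\dots,\alpha_m\}$ of $U_\cS$, $B$ is determined by $\varphi_2$ through the chosen basis of $\F_{q^n}$ over $\F_q$, and $\gamma = \rho|_{\F_q}$. Group-homomorphism then follows from checking that composition on the matrix side matches composition of the polynomial triples, including the semilinear twist by $\rho$. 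The main bookkeeping obstacle is that extensions of $\varphi_1$ from $U_\cS$ to $\F_{q^n}$ are non-unique; this is absorbed by observing that any two extensions with the same restriction to $U_\cS$ produce the same matrix action on every $X_f$, so replacing $\varphi_1$ by $\varphi_1|_{U_\cS}$ in condition (a) is exactly what makes the correspondence well-defined and injective.
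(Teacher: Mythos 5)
The paper offers no written argument for this lemma (it is dismissed as ``straightforward to verify''), so the only question is whether your verification is watertight; your treatment of $A$ and $B$ is, but the handling of $\gamma$ is not, and that is the one step of the lemma that is genuinely delicate. Entrywise application of $\gamma$ to $X_f$ replaces row $i$ by the coordinate vector of $\sigma(f(\alpha_i))$, where $\sigma$ is the $\gamma$-semilinear map of $\F_{q^n}$ fixing the basis used to define $\bv$; it does \emph{not} replace $f$ by $f^\rho$. Indeed $X_f^\gamma=X_{f^\rho}$ for all $f$ would force (take $f=aX$) $\sigma(a\alpha_i)=\rho(a)\alpha_i$ for all $a$ and all $i$, which fails for any basis once $\rho\neq\mathrm{id}$. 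As a map on the $\F_q$-linear functions $U_\cS\to\F_{q^n}$ encoded by the matrices, $X\mapsto X^\gamma$ is $h\mapsto \sigma\circ h\circ\varsigma^{-1}$, with $\varsigma$ the $\gamma$-semilinear map fixing $\alpha_1,\dots,\alpha_m$. Writing $\sigma=L\circ\rho$ with $L$ $\F_q$-linear, the factor $L$ can indeed be absorbed into $\varphi_2$ (which already shows that your claimed bijection ``$B$ determined by $\varphi_2$ through $B^\top$, $\gamma=\rho|_{\F_q}$'' cannot be taken at face value), but on the domain side one is left with the factor $\rho\circ\varsigma^{-1}\circ\mu_A$, which is $\gamma$-semilinear; if you force the shape $\varphi_2\circ f^\rho\circ\varphi_1$ with $\varphi_1$ $\F_q$-linear, the $\varphi_1$ you obtain maps $U_\cS$ onto $\rho(U_\cS)$, not onto $U_\cS$ as condition (a) demands.

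This is not mere bookkeeping: condition (c) with the coefficientwise $f^\rho$ probes the values of $f$ on $\rho^{-1}(U_\cS)$, which the matrix $X_f$ does not even determine when $\rho(U_\cS)\neq U_\cS$, whereas any transformation $X\mapsto AX^\gamma B$ only sees $f|_{U_\cS}$. For a general subset $\rC$ (the lemma is stated for arbitrary $\rC$) the two conditions genuinely diverge; e.g.\ for an $\rC$ containing a nonzero $f$ vanishing on $U_\cS$ but not on $\rho^{-1}(U_\cS)$, every $(A,B,\gamma)$ preserves the matrix code while no triple with that $\rho$ satisfies (c). So your concluding sentence ``concatenating the three actions, $X_f\mapsto AX_f^\gamma B$ corresponds to $f\mapsto\varphi_2\circ f^\rho\circ\varphi_1$ modulo $\theta_\cS$'' is exactly the unproved (and, as literally stated, false) identification; the argument is complete only in the case $\rho=\mathrm{id}$, or more generally when $\rho(U_\cS)=U_\cS$. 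A correct verification must introduce the basis-fixing semilinear maps $\sigma,\varsigma$ explicitly, absorb the linear discrepancies into $\varphi_2$ and $\varphi_1$, and then either add the hypothesis $\rho(U_\cS)=U_\cS$ or restate (a) as $\varphi_1(U_\cS)=\rho(U_\cS)$ (equivalently, replace $f^\rho$ by conjugation by a semilinear map stabilizing $U_\cS$); the same care is then needed in your final claim that composition is respected.
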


In the remaining part of this article, we will frequently use the same notation, for example $\varphi$, to denote an element in $\End_{\F_q}(\F_{q^n})$ as well as its image in $\lp{n}{q}[X]/(X^{q^n}-X)$ under a certain isomorphism. Just as in Lemma \ref{le:automophism_group}, $\varphi_1\circ f^\rho\circ \varphi_2$ is a composition of endomorphisms in $\End_{\F_q}(\F_{q^n})$, and it is also a linearized polynomial which we consider as its image under $\pi_\cS$.

\section{Nuclei of generalized twisted Gabidulin codes}\label{se:nuclei}

Let $\K$ be an arbitrary field. Let $\cC \subseteq \K^{m\times n}$ be a $\K$-linear rank metric code. We define the {\it middle nucleus} of $\cC$ as the following set of matrices of order $m$: 
$$N_m(\cC) = \{Z \in \K^{m\times m} \, \colon \, ZC \in \cC \text{ for all } C \in \cC \}.$$
In the same way we say that the {\it right nucleus} of $\cC$ is the following set: $$N_r(\cC)= \{Y \in \K^{n\times n} \, : \, CY \in \cC  \text{ for all }C \in \cC \}.$$

In particular, when $\cC$ defines a finite semifield $\mathbb{S}$, $N_m(\cC)$ (resp.\ $N_r(\cC)$) is exactly the middle (resp.\ right) nucleus of $\mathbb{S}$. 
In \cite{liebhold_automorphism_2016}, they are called the \emph{left idealiser} and the \emph{right idealiser} of $\cC$, respectively.

In \cite{liebhold_automorphism_2016}, Liebhold and Nebe computed the middle nucleus and the right nucleus of 
\[	\left\{ (\bv(f(\alpha_1), \dots, \bv(f(\alpha_m))^T: f\in \cG_{k,s}  \right\},\]
for any $m\leqslant n$ and any $\F_q$-linearly independent set $\{\alpha_1,\cdots,\alpha_m \}$. In \cite{lunardon_kernels_2017}, the middle nucleus and the right nucleus of \eqref{eq:polynomials_GTGC} were determined in the special case $m=n$.

In this section, we proceed to investigate the middle nucleus and the right nucleus of \eqref{eq:mn_MRD_cH} for $m<n$ and any $\F_q$-linearly independent set $\{\alpha_1,\cdots,\alpha_m \}$.

We need several lemmas for our final results.
\begin{lemma}\cite[Theorem 5.4]{lunardon_kernels_2017}\label{le:nuclei_of_MRD}
	Let $\cC$ be a linear MRD code in $\F_q^{m\times n}$ with $m\leqslant n$ and $d>1$. Then the following statements hold:
	\begin{enumerate}[label=(\alph*)]
	\item Its middle nucleus $N_m(\cC)$ is a finite field.
	\item When $\max\{d,m-d+2\} \geqslant \left\lfloor \frac{n}{2} \right\rfloor +1$, its right nucleus $N_r(\cC)$ is a finite field.
	\end{enumerate}
\end{lemma}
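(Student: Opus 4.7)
The plan for both parts is uniform: first verify that the nucleus in question is an $\F_q$-subalgebra of the ambient matrix algebra, then show it has no zero divisors, and finally invoke Wedderburn's little theorem (a finite division ring is commutative) to conclude that the nucleus is a finite field. Closure of $N_m(\cC)$ under addition and composition and containment of the identity are immediate from the $\F_q$-linearity of $\cC$; moreover, an invertible $Z\in N_m(\cC)$ acts as an $\F_q$-linear injection of the finite set $\cC$ into itself, hence surjectively, so $Z^{-1}\in N_m(\cC)$. Exactly the same argument applies to $N_r(\cC)$. The substantive task is therefore to show that every nonzero element of the nucleus has full rank, which is where the hypotheses on $d$ (and on $n$, for part (b)) enter.

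For part (a), I would take $Z\in N_m(\cC)\setminus\{0\}$, set $r:=\rank(Z)<m$ for contradiction, and split into two subcases. If $r<d$, then every $ZC$ has rank less than $d$ and lies in $\cC$, so $ZC=0$ for all $C\in\cC$; an MRD code with $d\leqslant m$ and $d>1$ contains a full-rank codeword (by Delsarte's rank distribution), so $Z$ annihilating such a matrix forces $Z=0$, a contradiction. In the remaining range $d\leqslant r<m$, I would apply a two-sided Singleton bound: the image $Z\cC$ has columns in the $r$-dimensional subspace $\mathrm{col}(Z)$, embedding it into $\F_q^{r\times n}$ as an $\F_q$-linear rank metric code of minimum distance at least $d$, so $|Z\cC|\leqslant q^{n(r-d+1)}$; similarly the kernel $\cC':=\{C\in\cC:ZC=0\}$ embeds into $\F_q^{(m-r)\times n}$, yielding $|\cC'|\leqslant q^{n(m-r-d+1)}$. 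Multiplying these and using $|\cC|=|\cC'|\cdot|Z\cC|=q^{n(m-d+1)}$ forces $n(m-d+1)\leqslant n(m-2d+2)$, contradicting $d>1$.

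The same scheme would be applied to $Y\in N_r(\cC)\setminus\{0\}$ with $r:=\rank(Y)<n$, but restricting rows rather than columns: the kernel $\cC':=\{C\in\cC:CY=0\}$ has rows in the $(n-r)$-dimensional left null space of $Y$, and $\cC Y$ has rows in the $r$-dimensional row space of $Y$. Applying the Singleton bound to each restricted code yields an inequality $|\cC|\leqslant q^{\sigma_1+\sigma_2}$ whose exponents depend on whether $n-r$ and $r$ lie below or above $m$. A direct case analysis shows that, for $d>1$, the resulting inequality is violated in every case except the residual range $m\leqslant r\leqslant n-m$ (which in particular requires $n\geqslant 2m$), where both Singleton bounds are simultaneously tight and the product inequality collapses to an equality.

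Eliminating this residual range is the principal obstacle and is precisely where the hypothesis $\max\{d,m-d+2\}\geqslant\lfloor n/2\rfloor+1$ is used. I would exploit the fact that $m-d+2$ equals the minimum distance $d^{\perp}$ of the Delsarte dual code $\cC^{\perp}$, and that the map $Y\mapsto Y^T$ gives a ring isomorphism $N_r(\cC)\cong N_r(\cC^{\perp})$; this renders the hypothesis symmetric under duality, so one may assume $d\geqslant\lfloor n/2\rfloor+1$. In the residual range, equality in the two Singleton bounds forces both $\cC'$ and $\cC Y$ to be MRD subcodes with fully determined rank distributions, and combining this rigidity with the lower bound $r\geqslant d>n/2$ is incompatible with their simultaneous embedding inside a single MRD code of the prescribed parameters. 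This duality-plus-rigidity step is the genuinely delicate part of the proof; the remainder of the lemma reduces to the two-sided Singleton computation used in part (a).
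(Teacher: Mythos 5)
The paper itself does not prove this lemma; it is imported verbatim from \cite[Theorem 5.4]{lunardon_kernels_2016}, and your outline follows essentially the same route as that source: a Singleton-type counting argument showing every nonzero element of the nucleus is nonsingular, Delsarte duality to exploit the symmetric hypothesis $\max\{d,m-d+2\}\geqslant \lfloor n/2\rfloor+1$ for the right nucleus, and Wedderburn's theorem to conclude commutativity. Your part (a) and the case analysis in (b) are sound (in the subcases where $r<d$ or $\min\{m,n-r\}<d$ the corresponding factor should be replaced by $1$ rather than a bound with negative exponent, which only strengthens the inequality). The one correction concerns the step you describe as genuinely delicate: no rigidity of rank distributions is needed, because after the duality reduction the residual range is simply empty. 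Indeed $m\leqslant r\leqslant n-m$ forces $n\geqslant 2m$, while $d\geqslant \lfloor n/2\rfloor+1$ together with $d\leqslant m$ gives $2m\geqslant 2d\geqslant n+1$, a contradiction; equivalently, $r\geqslant m\geqslant d>n/2$ is incompatible with $r\leqslant n-m\leqslant n-d<n/2$. Finally, $Y\mapsto Y^{T}$ is an anti-isomorphism of the nuclei rather than an isomorphism, which is harmless since invertibility of all nonzero elements is the only property you transport before invoking Wedderburn.
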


\begin{lemma}\cite[Lemma 3.2]{liebhold_automorphism_2016}\label{le:liebhold}
	Let $B$ be an $\F_q$-basis of $\F_{q^n}$. Let $\Delta_B(\F_{q^n})$ denote the regular representation of $\F_{q^n}$ in $\F_q^{n\times n}$ with respect to $B$, i.e.\ the set of matrices associated to the $\F_q$-linear maps $x\mapsto ax$ for $a,x\in \F_{q^n}$. Let $A$ be an $\F_q$-algebra satisfying
	\[\Delta_B(\F_{q^n}) \subseteq  A \subseteq \F_q^{n\times n}. \]
	Then there is a subfield $\F_q\subseteq \F_{q^\ell}\subseteq \F_{q^n}$ such that
	\[A= C_{\F_q^{n\times n}} (\Delta_B(\F_{q^n})) \cong \F_{q^\ell}^{r\times r},\]
	where $r=n/\ell$ and $C_{\F_q^{n\times n}} (M)$ stands for the centralizer of $M$ in $\F_q^{n\times n}$.
\end{lemma}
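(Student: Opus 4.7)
The plan is to apply Wedderburn's structure theory to the natural action of $A$ on $V:=\F_{q^n}$, viewed as an $\F_q$-vector space via the identification $\F_q^{n\times n}=\End_{\F_q}(V)$ afforded by the basis $B$. Under this identification $\Delta_B(\F_{q^n})$ is precisely the algebra of $\F_{q^n}$-multiplication operators on $V$. I would first check that $V$ is a simple $A$-module: since $\Delta_B(\F_{q^n})\subseteq A$, any nonzero $A$-submodule of $V$ is \emph{a fortiori} a nonzero $\F_{q^n}$-subspace of the $1$-dimensional $\F_{q^n}$-space $V$, and must therefore equal $V$. Thus $A$ is a primitive, and being finite-dimensional, simple $\F_q$-algebra that acts faithfully on $V$.

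By Artin--Wedderburn, $A\cong M_r(D)$ for some finite division ring $D\cong\End_A(V)$; Wedderburn's little theorem then gives $D\cong\F_{q^\ell}$ for some $\ell$, and counting $\F_q$-dimensions on the simple $A$-module $V$ yields $r\ell=n$. The next step is to pin down the copy of $D$ sitting inside $\End_{\F_q}(V)$: every element of $\End_A(V)$ must commute with $\Delta_B(\F_{q^n})\subseteq A$, so
\[\End_A(V)\subseteq C_{\End_{\F_q}(V)}\bigl(\Delta_B(\F_{q^n})\bigr)=\End_{\F_{q^n}}(V)=\Delta_B(\F_{q^n}),\]
where the last equality uses once more that $V$ is $1$-dimensional over $\F_{q^n}$. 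Consequently $D$ is a subfield of $\Delta_B(\F_{q^n})$ of order $q^\ell$, and by uniqueness of subfields of a finite field this forces $D=\Delta_B(\F_{q^\ell})$ for a uniquely determined intermediate field $\F_q\subseteq \F_{q^\ell}\subseteq \F_{q^n}$.

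To close, I would invoke the double centralizer theorem in the simple algebra $\End_{\F_q}(V)\cong\F_q^{n\times n}$: since $V$ is a faithful simple $A$-module with commuting ring $D$, we have
\[A=C_{\F_q^{n\times n}}(D)=C_{\F_q^{n\times n}}\bigl(\Delta_B(\F_{q^\ell})\bigr),\]
and this centralizer is nothing but the algebra of $\F_{q^\ell}$-linear endomorphisms of $\F_{q^n}$, which is isomorphic to $\F_{q^\ell}^{r\times r}$ with $r=n/\ell$.

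The step I expect to require the most care is the identification $D=\Delta_B(\F_{q^\ell})$ as a \emph{specific} subfield of $\End_{\F_q}(V)$, rather than merely as an abstract isomorphic copy of $\F_{q^\ell}$; the hypothesis $\Delta_B(\F_{q^n})\subseteq A$ is exactly what forces $\End_A(V)$ into the centralizer of $\Delta_B(\F_{q^n})$, and the self-centralizing property of the maximal subfield $\Delta_B(\F_{q^n})$ then leaves no room for ambiguity. Everything else is a routine application of the Artin--Wedderburn and double centralizer theorems once the faithful simple module $V$ has been identified.
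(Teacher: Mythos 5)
Your proof is correct. Note that the paper does not prove this statement at all---it is quoted verbatim as Lemma 3.2 of Liebhold and Nebe \cite{liebhold_automorphism_2016}---so there is no in-paper argument to compare against; your derivation (simplicity of $V=\F_{q^n}$ as an $A$-module because $\Delta_B(\F_{q^n})\subseteq A$ and $V$ is one-dimensional over $\F_{q^n}$, identification of $\End_A(V)$ as a subfield of $\Delta_B(\F_{q^n})$ via the self-centralizing property of this maximal subfield together with Wedderburn's little theorem, and finally the double centralizer theorem giving $A=C_{\F_q^{n\times n}}(\Delta_B(\F_{q^\ell}))\cong\F_{q^\ell}^{r\times r}$ with $r\ell=n$) is the standard argument and handles correctly the one delicate point you flag, namely pinning down $D$ as the concrete subfield $\Delta_B(\F_{q^\ell})$ rather than an abstract copy of $\F_{q^\ell}$.
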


By \eqref{eq:polynomials_GTGC} and the definition of right nucleus, it is routine to verify the following statement.
\begin{lemma}\label{le:nucleus_polynomial}
	Let $\cS $ be an $m$-subset of $\F_q$-linearly independent elements in $\F_{q^n}$. For any $\rC\subseteq\lp{n}{q}[X]$, the right nucleus of the corresponding rank metric code defined by \eqref{eq:general_code_from_poly} is isomorphic to 
	\[\cN_r(\pi_\cS(\rC)) = \left\{\varphi\in \End_{\F_q}(\F_{q^n}) : \pi_\cS(\varphi\circ f)\in  \pi_\cS(\rC)\text{ for all }f\in \rC\right\}. \] 
	Its middle nucleus is  isomorphic to 
	\[\cN_m(\pi_\cS(\rC)) = \left\{\psi|_{U_\cS}: \psi\in \End_{\F_q}(\F_{q^n}) , \psi(U_\cS)\subseteq U_\cS, \pi_\cS(f\circ\psi)\in  \pi_\cS(\rC)\text{ for all }f\in \rC\right\}.\]
\end{lemma}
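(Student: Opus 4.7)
The plan is to unfold the definitions of $N_r(\cC)$ and $N_m(\cC)$ in the matrix picture of \eqref{eq:general_code_from_poly} and translate each containment condition into the polynomial language via Lemma \ref{le:polynomials_matrices}. I would fix the $\F_q$-basis of $\F_{q^n}$ that underlies the coordinate map $\bv$; through this basis, $\F_q^{n\times n}$ is identified with $\End_{\F_q}(\F_{q^n})$, and the basis $\{\alpha_1,\dots,\alpha_m\}$ of $U_\cS$ identifies $\F_q^{m\times m}$ with $\End_{\F_q}(U_\cS)$.

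For the right nucleus, I would associate to $Y\in \F_q^{n\times n}$ the (unique) $\varphi\in \End_{\F_q}(\F_{q^n})$ satisfying $\bv(x)^T Y = \bv(\varphi(x))^T$ for every $x\in \F_{q^n}$. The matrix $C\in \cC$ attached to $f\in \rC$ has rows $\bv(f(\alpha_i))^T$, so the rows of $CY$ become $\bv(\varphi(f(\alpha_i)))^T$; that is, $CY$ is the matrix produced by \eqref{eq:general_code_from_poly} from the linearized polynomial $\varphi\circ f$. Lemma \ref{le:polynomials_matrices} then shows that $CY\in \cC$ for every $C\in \cC$ is equivalent to $\pi_\cS(\varphi\circ f)\in \pi_\cS(\rC)$ for every $f\in \rC$, which is the stated description of $\cN_r(\pi_\cS(\rC))$.

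For the middle nucleus, writing $Z=(z_{ji})\in \F_q^{m\times m}$ and using the $\F_q$-linearity of $f$, the $j$-th row of $ZC$ equals
\[\sum_{i=1}^m z_{ji}\,\bv(f(\alpha_i))^T = \bv\!\left(f\!\left(\sum_{i=1}^m z_{ji}\alpha_i\right)\right)^T.\]
Defining an $\F_q$-linear map $\tilde\psi\colon U_\cS\to U_\cS$ by $\tilde\psi(\alpha_j)=\sum_i z_{ji}\alpha_i$ and extending it in any way to an $\F_q$-endomorphism $\psi$ of $\F_{q^n}$ with $\psi(U_\cS)\subseteq U_\cS$, we see that $ZC$ is the matrix produced by \eqref{eq:general_code_from_poly} from $f\circ\psi$. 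Conversely, every such $\psi$ determines a $Z$ by this recipe, and $\pi_\cS(f\circ\psi)$ depends only on $\psi|_{U_\cS}$, because $f\circ\psi$ is only evaluated on $U_\cS$ when one reduces modulo $\theta_\cS$. This yields the description of $\cN_m(\pi_\cS(\rC))$.

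The only subtle point is bookkeeping: one must check that the correspondences $Y\leftrightarrow \varphi$ and $Z\leftrightarrow \psi|_{U_\cS}$ are bijective between the appropriate sets, and that the middle-nucleus condition genuinely factors through restrictions to $U_\cS$, which is precisely why the lemma records $\psi|_{U_\cS}$ rather than a full endomorphism of $\F_{q^n}$. I do not anticipate any substantive obstacle; the result amounts to a dictionary translation between matrix multiplication and polynomial composition.
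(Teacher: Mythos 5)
Your proposal is correct, and it is exactly the routine dictionary translation the paper has in mind: the paper states this lemma without proof ("it is routine to verify"), and your unfolding of right/left matrix multiplication into $\varphi\circ f$ and $f\circ\psi$ via Lemma \ref{le:polynomials_matrices}, together with the observation that the middle-nucleus condition only depends on $\psi|_{U_\cS}$, is precisely that verification. The only cosmetic caveat is that your correspondences $Y\mapsto\varphi$ and $Z\mapsto\psi|_{U_\cS}$ reverse the order of multiplication (an anti-isomorphism of algebras), which is harmless here since the nuclei in the cases of interest are fields and the lemma is used only through the sets it describes.
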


When $\pi_\cS(\rC)$ defines an MRD code $\cC$ by \eqref{eq:general_code_from_poly}, by Lemma \ref{le:nuclei_of_MRD}, all nonzero elements in $\cN_m(\pi_\cS(\rC))$ are invertible, which means that they form a subgroup of the automorphism group $\Aut(\pi_\cS(\rC))$; however, as the nonzero elements in $\cN_r(\pi_\cS(\rC))$ are not always invertible in general, it is not necessary that they form a subgroup in $\Aut(\pi_\cS(\rC))$.

\begin{lemma}\label{le:extra_important}
	Let $\cS$ be an $m$-subset $\cS$ of $\F_q$-linearly independent elements in $\F_{q^n}$. Let $s$ be an integer such that $\gcd(n,s)=1$ and $\varphi\in\lp{n}{q}[X]$. For any $a\in \F_{q^n}$, assume that 
	$$\varphi(aX)\equiv\sum_{j=0}^{m-1} c_j X^{q^{js}} \mod \theta_\cS,$$
	where $c_j$ for $j=0,\cdots, m-1$ depend on $a$. Let $j_0:=\max \{j : c_j\neq0\}$. 
	For any non-negative integer $t$ satisfying $t+j_0\leq m-1$ and any $j$, $c_j\neq 0$ if and only if there exists $\bar{a}\in \F_{q^n}$ such that  
	$$\varphi(\bar{a}X^{q^{ts}})\equiv\sum_{j=0}^{m-1} \bar{c}_j X^{q^{js}} \mod \theta_\cS$$
	and $\bar{c}_{j+t}\neq 0$.
\end{lemma}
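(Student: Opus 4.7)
The key observation is that both $c_j$ and $\bar c_l$ are linearized polynomials in $a$ and $\bar a$ respectively, and the natural reading of the statement is that ``$c_j\neq 0$'' (resp.\ ``there exists $\bar a$ with $\bar c_{j+t}\neq 0$'') means that the corresponding polynomial is not identically zero; under this interpretation $j_0$ depends only on $\varphi$ and $\theta_\cS$. My plan is to derive explicit formulas for those polynomials and then deduce the biconditional from a short identity that isolates exactly the $j'$'s ruled out by the hypothesis $t+j_0\le m-1$. Using $\gcd(n,s)=1$ to reindex, write $\varphi(X)=\sum_{i=0}^{n-1}\gamma_i X^{q^{is}}$, and for each $i\ge 0$ record the unique reduction
\[ X^{q^{is}}\equiv \sum_{j=0}^{m-1}\nu_{i,j}\,X^{q^{js}} \pmod{\theta_\cS} \]
supplied by Corollary \ref{coro:representation_all} (so $\nu_{i,j}=\delta_{ij}$ for $i<m$). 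Expanding $\varphi(aX)$ and $\varphi(\bar a X^{q^{ts}})$ term by term yields $c_j(a)=h_j(a)$ and $\bar c_l(\bar a)=\bar h_l(\bar a)$, where $h_j(X):=\sum_i\gamma_i\nu_{i,j}X^{q^{is}}$ and $\bar h_l(X):=\sum_i\gamma_i\nu_{i+t,\,l}X^{q^{is}}$, so the claim reduces to $h_j\not\equiv 0\iff \bar h_{j+t}\not\equiv 0$.

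The central technical step is a recursion relating $\nu_{i+t,\,\cdot}$ to $\nu_{i,\,\cdot}$. Starting from the identity $u^{q^{is}}=\sum_{j'}\nu_{i,j'}u^{q^{j's}}$ valid for every $u\in U_\cS$ and raising to the $q^{ts}$-th power, I obtain $u^{q^{(i+t)s}}=\sum_{j'}\nu_{i,j'}^{q^{ts}}u^{q^{(j'+t)s}}$. Those terms with $j'+t\le m-1$ are already expressed in the canonical basis, while those with $j'+t\ge m$ must be reduced once more using $u^{q^{(j'+t)s}}=\sum_l \nu_{j'+t,\,l}u^{q^{ls}}$. Matching the coefficient of $u^{q^{(j+t)s}}$ by the uniqueness in Corollary \ref{coro:representation_all} gives
\[ \nu_{i+t,\,j+t} \;=\; \nu_{i,j}^{q^{ts}} \;+\; \sum_{j'=m-t}^{m-1}\nu_{i,j'}^{q^{ts}}\,\nu_{j'+t,\,j+t}. \]

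Substituting this identity into $\bar h_{j+t}$ and introducing the auxiliary $\tilde h_l(X):=\sum_i\gamma_i\nu_{i,l}^{q^{ts}}X^{q^{is}}$ produces
\[ \bar h_{j+t}(X) \;=\; \tilde h_j(X) \;+\; \sum_{j'=m-t}^{m-1}\nu_{j'+t,\,j+t}\,\tilde h_{j'}(X). \]
The elementary equivalence $\gamma_i\nu_{i,l}=0\iff \gamma_i\nu_{i,l}^{q^{ts}}=0$ (checked term by term) shows that $h_l\equiv 0\iff \tilde h_l\equiv 0$. The hypothesis $t+j_0\le m-1$ is calibrated exactly so that every index $j'\in\{m-t,\dots,m-1\}$ satisfies $j'>j_0$; by the definition of $j_0$ this forces $h_{j'}\equiv 0$ and hence $\tilde h_{j'}\equiv 0$ for each such $j'$. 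The correction sum therefore collapses, giving $\bar h_{j+t}=\tilde h_j$, and the biconditional follows from $\bar h_{j+t}\not\equiv 0\iff \tilde h_j\not\equiv 0\iff h_j\not\equiv 0$. I expect the main obstacle to be obtaining the recursion for $\nu_{i+t,\,\cdot}$ cleanly and keeping the index ranges straight; once that identity is in place, the argument is a short chain of equivalences that uses the hypothesis precisely to eliminate the ``wrap-around'' contributions.
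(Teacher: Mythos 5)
Your proof is correct and follows essentially the same route as the paper: expand $\varphi$ in $q$-monomials, reduce the shifted exponents modulo $\theta_\cS$ via Corollary \ref{coro:representation_all}, interpret nonvanishing of $c_j$, $\bar c_{j+t}$ existentially in $a$, $\bar a$ (equivalently, as nonvanishing of linearized polynomials), and use $t+j_0\le m-1$ to control the indices. The only difference is one of explicitness: you derive the recursion for $\nu_{i+t,\,\cdot}$ including the wrap-around terms $j'\ge m-t$ and then show those corrections vanish under the hypothesis, whereas the paper's proof applies the $q^{ts}$-Frobenius twist to the reduction of $X^{q^i}$ and leaves this reduction step implicit, so your write-up is a somewhat more careful rendering of the same argument.
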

\begin{proof}
	By Corollary \ref{coro:representation_all}, we know that for each $i\in\{0,\cdots, n-1 \}$, there exist $e_i^{(j)}\in \F_{q^n}$ for $j=0,1,\cdots, m-1$ such that
	\begin{equation}\label{eq:e_ij_expand}
		X^{q^i} \equiv \sum_{j=0}^{m-1} e_i^{(j)} X^{q^{js}} \mod \theta_\cS.
	\end{equation}
	Without loss of generality, we assume that $\deg(\varphi)\leq q^{n-1}$ and $\varphi=\sum_{i=0}^{n-1}d_i X^{q^i}$. By \eqref{eq:e_ij_expand}, 
	\begin{align*}
		\varphi(aX) &\equiv \sum_{i=0}^{n-1} d_i a^{q^i} \sum_{j=0}^{m-1} e_i^{(j)} X^{q^{js}} \mod \theta_\cS\\
					&\equiv \sum_{j=0}^{m-1}\left(\sum_{i=0}^{n-1} d_i a^{q^i}  e_i^{(j)}\right) X^{q^{js}} \mod \theta_\cS,
	\end{align*}
	for any $a\in \F_{q^n}$. For any given $j$, there exists $a\in \F_{q^n}^*$ such that $c_j:=\sum_{i=0}^{n-1} d_i a^{q^i}  e_i^{(j)}\neq 0$ if and only if there is $i_0\in \{0,\cdots, n-1\}$ such that $d_{i_0}e_{i_0}^{(j)}\neq 0$.
	On the other hand,
	\begin{align*}
		\varphi(\bar{a}X^{q^{ts}}) &\equiv \sum_{i=0}^{n-1} d_i \bar{a}^{q^i} \left(\sum_{j=0}^{m-1} e_i^{(j)} X^{q^{js}}\right)^{q^{ts}} \mod \theta_\cS\\
					&\equiv \sum_{j=0}^{m-1}\left(\sum_{i=0}^{n-1} d_i \bar{a}^{q^i}  e_i^{(j)q^{ts}}\right) X^{q^{(j+t)s}} \mod \theta_\cS.
	\end{align*}
	Thus, for any $t$ satisfying $t+j_0\leq m-1$, $\bar{c}_{j+s}:=\sum_{i=0}^{n-1} d_i \bar{a}^{q^i}  e_i^{(j)q^{ts}}\neq 0$ for some $\bar{a}$ if and only if there is at least one term $d_i  e_i^{(j)}\neq 0$, which is equivalent to $c_j\neq 0$ for some $a\in \F_{q^n}^*$.  This finishes the proof.
\end{proof}
For any $\varphi \in \lp{n}{q}[X]$, $\theta_\cS$ defined as in Lemma \ref{le:polynomials_matrices} and $c_j$ and $\bar{c}_j$ defined as in Lemma \ref{le:extra_important}, if we define
\[\mathscr{A}_{\varphi,0}:=\{ j: c_j\neq 0 \text{ for some }a\in \F_{q^n}\}\]
and
\[\mathscr{A}_{\varphi,t}:=\{ j: \bar{c}_{j}\neq 0 \text{ for some }\bar{a}\in \F_{q^n}\},\]
then Lemma \ref{le:extra_important} shows us that $\mathscr{A}_{\varphi,t}=\{i+t: i\in \mathscr{A}_{\varphi,0}\}$ for $t\leq m-1-j_0$ where $j_0=\max\mathscr{A}_{\varphi,0}$.

Next, we proceed to show that for most cases, monomials in $\pi_\cS(\cH_{k,s}(\eta, h))$ are mapped to monomials by the elements in its right or middle nucleus.
\begin{lemma}\label{le:mono_to_mono_right}
	Let $k$, $m$ and $n$ be positive integers satisfying $k<m\leqslant n$. Let $\cS $ be an $m$-subset of $\F_q$-linearly independent elements in $\F_{q^n}$.	For each element $\varphi\in\cN_r( \pi_\cS(\cH_{k,s}(\eta, h)))$ and any $a\in \F_{q^n}$, there exists an element $b\in \F_{q^n}$ such that
	$$\varphi(aX) \equiv bX \mod \theta_\cS,$$
	if one of the following collections of conditions are satisfied.
	\begin{enumerate}[label=(\alph*)]
	\item $\eta=0$, i.e., $\cH_{k,s}(\eta, h)=\cG_{k,s}$.
	\item $\eta\neq 0$, $m>k+1$ and $(m,k)\neq(4,2)$. 
	\end{enumerate}
\end{lemma}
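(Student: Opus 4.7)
The aim is to show that if $\varphi \in \cN_r(\pi_\cS(\cH_{k,s}(\eta,h)))$, then the coefficients appearing in $\varphi(aX)\equiv \sum_{j=0}^{m-1}c_j X^{q^{sj}}\mod\theta_\cS$ satisfy $c_j=0$ for every $j\geq 1$ and every $a\in\F_{q^n}$; equivalently, the set $\mathscr{A}_{\varphi,0}$ defined just before the statement must be contained in $\{0\}$. Case (a), corresponding to $\eta=0$ (so $\cH_{k,s}(\eta,h)=\cG_{k,s}$), is essentially covered by the computation of the right nucleus of generalized Gabidulin codes in \cite{liebhold_automorphism_2016}, so my proof plan focuses on case (b).

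For case (b) I would combine two families of constraints stemming from the right nucleus condition $\pi_\cS(\varphi\circ f)\in\pi_\cS(\cH_{k,s}(\eta,h))$ for every $f\in\cH_{k,s}(\eta,h)$. \emph{Monomial step}: for each $i\in\{1,\dots,k-1\}$, the monomial $aX^{q^{si}}$ belongs to $\cH_{k,s}(\eta,h)$, because choosing $a_i=a$ and all other $a_j=0$ (in particular $a_0=0$) kills the twisted term $\eta a_0^{q^h}X^{q^{sk}}$. Since every element of $\pi_\cS(\cH_{k,s}(\eta,h))$ is supported in positions $\{0,1,\dots,k\}$ relative to the basis $\{X,X^{q^s},\dots,X^{q^{(m-1)s}}\}$ (by Corollary \ref{coro:representation_all} together with the shape of $\cH_{k,s}(\eta,h)$), this forces $\mathscr{A}_{\varphi,i}\subseteq\{0,1,\dots,k\}$. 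Combined with Lemma \ref{le:extra_important}, which gives $\mathscr{A}_{\varphi,i}=\mathscr{A}_{\varphi,0}+i$ whenever $i+j_0\leq m-1$ (where $j_0=\max\mathscr{A}_{\varphi,0}$), iterating this shift-and-intersect for $i=1,2,\dots,k-1$ progressively shrinks $\mathscr{A}_{\varphi,0}$; the strongest step ($i=k-1$) yields $\mathscr{A}_{\varphi,0}\subseteq\{0,1\}$.

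\emph{Binomial step}: to eliminate the remaining possibility $1\in\mathscr{A}_{\varphi,0}$, I would use the binomial $f_a:=aX+\eta a^{q^h}X^{q^{sk}}\in\cH_{k,s}(\eta,h)$. The condition $\pi_\cS(\varphi\circ f_a)\in\pi_\cS(\cH_{k,s}(\eta,h))$ requires the coefficient at each position $j\in\{k+1,\dots,m-1\}$ of $\pi_\cS(\varphi(aX))+\pi_\cS(\varphi(\eta a^{q^h}X^{q^{sk}}))$ to vanish for every $a$. Having already reduced to $\mathscr{A}_{\varphi,0}\subseteq\{0,1\}$, the first summand contributes nothing at these positions, so the second summand alone must vanish there. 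Since $a\mapsto \eta a^{q^h}$ is a bijection of $\F_{q^n}$, this amounts to $\mathscr{A}_{\varphi,k}\cap\{k+1,\dots,m-1\}=\emptyset$. Applying Lemma \ref{le:extra_important} with $t=k$ (admissible because $k+j_0\leq k+1\leq m-1$ under the hypothesis $m>k+1$) yields $\mathscr{A}_{\varphi,k}=\mathscr{A}_{\varphi,0}+k$; hence $1\in\mathscr{A}_{\varphi,0}$ would produce the forbidden element $k+1\in\mathscr{A}_{\varphi,k}$. This contradiction forces $\mathscr{A}_{\varphi,0}\subseteq\{0\}$, as required.

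The delicate point, and the main obstacle, is ensuring that Lemma \ref{le:extra_important} is actually applicable throughout the monomial iteration: its conclusion requires $i+j_0\leq m-1$, which is not automatic when $j_0$ is initially close to $m-1$. Handling this demands either an a priori bound on $j_0$ or an explicit computation using the reduction of $X^{q^{sm}}$ modulo $\theta_\cS$, which introduces additional algebraic relations among the expansion coefficients $e_i^{(j)}$ of Lemma \ref{le:extra_important}. It is precisely in the corner $(m,k)=(4,2)$ that only a single shift $i=1$ is available from the monomial family, the admissibility margin for the binomial step becomes tight, and this more refined analysis does not suffice to preclude all nontrivial configurations of $\mathscr{A}_{\varphi,0}$; this is the source of the exclusion of $(m,k)=(4,2)$ in the statement.
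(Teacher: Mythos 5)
Your plan for case (b) has a genuine gap at its load-bearing step. The ``monomial step'' needs Lemma \ref{le:extra_important} with shifts $i=1,\dots,k-1$ applied to the expansion of $\varphi(aX)$, but that lemma only gives $\mathscr{A}_{\varphi,i}=\mathscr{A}_{\varphi,0}+i$ under the hypothesis $i+j_0\leq m-1$, and when $\eta\neq 0$ the polynomial $aX$ does not lie in $\cH_{k,s}(\eta,h)$, so there is no a priori bound on $j_0=\max\mathscr{A}_{\varphi,0}$: it could be as large as $m-1$, and then already the first shift $i=1$ is inadmissible. You acknowledge this yourself as ``the main obstacle'' and only gesture at an uncarried-out computation with the reduction of $X^{q^{sm}}$ modulo $\theta_\cS$; but that is precisely where all the work lies, so the proposal is incomplete rather than a proof. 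Two further signs that the plan as stated cannot be right: (i) for $k=1$ the monomial family is empty, so the premise $\mathscr{A}_{\varphi,0}\subseteq\{0,1\}$ of your binomial step is never established, yet case (b) includes $k=1$, $m>2$; (ii) if one simply grants the monomial step, your binomial step is admissible whenever $m\geq k+2$ and would therefore also prove the deliberately excluded case $(m,k)=(4,2)$ (which the paper leaves open for the right nucleus), showing that the unproven step is not a technicality but the actual content. Your diagnosis of the $(4,2)$ exclusion as a ``tight admissibility margin'' is therefore off the mark.

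The paper's proof avoids the unbounded-$j_0$ problem by running the shift argument from the other end. For $k\geq 2$ it expands $\varphi(aX^{q^s})$ rather than $\varphi(aX)$: since $aX^{q^s}\in\cH_{k,s}(\eta,h)$, its image automatically has support in $\{0,\dots,k\}$, so the maximum $i_0$ is known and one may shift \emph{upwards} by $k+1-i_0$ (admissible because $m>k+1$) to land a nonzero coefficient at the forbidden position $k+1\leq m-1$; this yields $\varphi(aX^{q^s})\equiv c_0X+c_1X^{q^s}+c_2X^{q^{2s}}\bmod\theta_\cS$. It then kills $c_0$ and $c_2$ by two different arguments: for $k>2$ one subtracts the in-code monomial $c_2X^{q^{2s}}$ and invokes Corollary \ref{coro:representation_all} (a nonzero $c_0X$ cannot be congruent to any code element, because a nonzero coefficient at position $0$ forces the twist term at position $k$); for $k=2$ that monomial is \emph{not} in the code, and the paper instead uses a transversal containing $X^{q^{n-s}}$ together with a root-counting bound, which is exactly what requires $m>k+2=4$ --- this, not shift admissibility, is the source of the $(4,2)$ exclusion. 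The case $k=1$ is handled separately via the binomial $cX+\eta c^{q^h}X^{q^s}$. Finally, delegating case (a) to \cite{liebhold_automorphism_2016} inverts the paper's logic (the lemma is used here to reprove that nucleus computation in polynomial form); in case (a) the direct argument is easy precisely because $aX\in\cG_{k,s}$ gives the a priori bound $j_0\leq k-1$ that is missing in case (b).
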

\begin{proof}
	By Lemma \ref{le:nucleus_polynomial}, for each $\varphi \in  \cN_r(\pi_\cS(\cH_{k,s}(\eta, h)))$,
	\begin{equation}\label{eq:varphi(ax)}
		\varphi(aX) = \sum_{i=0}^{n-1} d_i (aX)^{q^i}, 
	\end{equation}
	for certain $d_i\in \F_{q^n}$.
	
	\vspace*{2mm}
	\noindent(a) Recall that when $\eta=0$,
	\[\cG_{k,s}=\cH_{k,s}(0, h) = \{a_0 X + a_1 X^{q^s} + \dots +a_{k-1} X^{q^{s(k-1)}} : a_0,a_1,\dots, a_{k-1}\in \F_{q^n} \}.\]
	As $\pi_\cS(\varphi(aX))\in \pi_\cS(\cG_{k,s})$, we assume that 
	$$\varphi(aX)\equiv\sum_{i=0}^{k-1} c_i X^{q^{is}} \mod \theta_\cS,$$ 
	where $c_i$ depend on the value of $a$; see \eqref{eq:varphi(ax)}. Let $i_0:= \max\{i:c_i\neq 0\}$. We are going to prove that $i_0 = 0$.  
	
	For any integer $j\geq 0$ satisfying $j+i_0\leq m-1$, by Lemma \ref{le:extra_important}, there exists $\bar{a}\in \F_{q^n}$ such that
	\begin{equation}\label{eq:varphi(ax)_1}
		\varphi(\bar{a}X^{q^{js}})\equiv \sum_{i=0}^{m-1} \bar{c}_i X^{q^{(i+j)s}} \mod \theta_\cS,
	\end{equation}
	where $\bar{c}_{i_0}\neq 0$.
	
	By way of contradiction, we assume that $i_0>0$. It means $0\leq k-i_0\leq m-1 $ whence $\bar{a}X^{q^{(k-i_0)s}}\in \cG_{k,s}$. Plugging $j=k-i_0$ into \eqref{eq:varphi(ax)_1}, we have
	\begin{equation}\label{eq:re_add_1}
		\varphi(\bar{a}X^{q^{(k-i_0)s}})\equiv \sum_{i=0}^{m-1} \bar{c}_i X^{q^{(i+k-i_0)s}} \mod \theta_\cS.
	\end{equation}
	From $\bar{a}X^{q^{(k-i_0)s}}\in \cG_{k,s}$ and  $\bar{c}_i X^{q^{(i+k-i_0)s}}\in \cG_{k,s}$ for all $0\leq i<i_0$, we can derive
	$\pi_\cS(c_{i_0} X^{q^{ks}})\in\pi_\cS(\cG_{k,s})$. On the other hand, by Corollary \ref{coro:representation_all} any $g\in \cG_{k,s}$ and $c_{i_0} X^{q^{ks}}$ belong to distinct residue classes in $ \lp{n}{q}[X] / (\theta_\cS)$, from which it follows that $\pi_\cS(c_{i_0} X^{q^{ks}})\notin\pi_\cS(\cG_{k,s})$. It is a contradiction.
	
%
	Hence, $i_0=0$ which concludes the proof.
	
	\vspace*{2mm}
	\noindent (b) Depending on the value of $k$, we divide our proof into three cases.
	\begin{enumerate}[label=(\roman*)]
		\item When $k=1$ and $m>k+1=2$, it is not difficult to show that for every $a\in \F_{q^n}$, $\varphi(aX) \equiv bX \mod \theta_\cS$ for some $b\in \F_{q^n}$. Precisely since $\pi_\cS(\varphi(aX))\in \pi_\cS(\cH_{1,s}(\eta, h))$, we may assume, by way of contradiction, that there exist $b_0$, $b_1\in \F_{q^n}$ with $b_1\neq 0$ such that 
		\[\varphi(aX) \equiv b_0X + b_1 X^{q^s} \mod \theta_\cS,\]
		because of $\pi_\cS(\varphi(aX))\in \pi_\cS(\cH_{1,s}(\eta, h))$. Furthermore, by Lemma \ref{le:extra_important}, we can choose $c$ such that
		\[\varphi(cX + \eta c^{q^h}X^{q^s}) \equiv b'_0X + b'_1 X^{q^s} + b'_2 X^{q^{2s}}\mod \theta_\cS,\]
		where $b'_2\neq 0$. It follows that $\pi_\cS( b'_0X + b'_1 X^{q^s} + b'_2X^{q^{2s}})\in \pi_\cS(\cH_{1,s}(\eta, h))$. However, by $m>2$ and Corollary \ref{coro:representation_all}, there cannot exist any $a_0\in \F_{q^n}$ such that
		\[a_0X +  \eta a_0^{q^h}X^{q^{s}} \equiv  b'_0X + b'_1 X^{q^s} + b'_2X^{q^{2s}}\mod \theta_\cS,\]
		which is a contradiction.
		\item 	When $k>2$ and $m>k+1$, by an analogous argument as in the proof of (a), we can show that 
		\begin{equation}\label{eq:varphi(ax)_2}
			\varphi(aX^{q^s})\equiv c_{0} X + c_1 X^{q^{s}} + c_2 X^{q^{2s}}  \mod \theta_\cS,
		\end{equation}
		for certain $c_i\in \F_{q^n}$. Otherwise, suppose that $\varphi(aX^{q^s})$ has more nonzero coefficients which means $i_0:= \max\{i :c_i\neq 0\} > 2$. As $0 < k+2-i_0 < k$, we have that $\bar{a}X^{q^{s(k+2-i_0)}}\in \cH_{k,s}(\eta, h)$ for all $\bar{a}\in\F_{q^n}$. As in \eqref{eq:re_add_1}, by looking at $\varphi(\bar{a}X^{q^{s(k+2-i_0)}})$, we derive  $\pi_{\cS}(\bar{a}X^{q^{s(k+1)}}) \in \pi_{\cS}(\cH_{k,s}(\eta, h))$ for all $\bar{a}\in\F_{q^n}$, which, since $m>k+1$, leads to a contradiction.
		
		We proceed to show that $c_0$ must be $0$ by way of contradiction. It can be similarly shown that $c_2=0$, and we omit its proof.
		
		As $k>2$, we have $c_2X^{q^{2s}}\in  \cH_{k,s}(\eta, h)$. Noting that $c_1X^{q^{s}},\varphi(aX^{q^s})\in  \cH_{k,s}(\eta, h)$, together with \eqref{eq:varphi(ax)_2}, i.e.,
		\[c_{0} X\equiv \varphi(aX^{q^s})-c_1 X^{q^{s}} - c_2 X^{q^{2s}}  \mod \theta_\cS ,\]
		we have $\pi_\cS(c_{0} X) \in \pi_\cS(\cH_{k,s}(\eta, h))$. However, by Corollary \ref{coro:representation_all}, for arbitrary $g\in \cH_{k,s}(\eta, h)$, $c_{0} X$ and $g$ belong to distinct residue classes in $ \lp{n}{q}[X] / (\theta_\cS)$, which is a contradiction.  
		
		Therefore $\varphi(aX^{q^s})\equiv c_1 X^{q^{s}} \mod \theta_\cS$, which implies that $\varphi(\bar{a}X) \equiv bX \mod \theta_\cS$ for certain $b\in \F_{q^n}$ by Lemma \ref{le:extra_important}.
		
		\item 	When $k=2$ and $m>k+2=4$, again we can derive  \eqref{eq:varphi(ax)_2} for certain $c_i\in \F_{q^n}$. Next we prove that $c_0$ must be $0$ by way of contradiction and it can be similarly shown that $c_2=0$. As $\varphi(aX^{q^s})\equiv c_1 X^{q^{s}} \mod \theta_\cS$ implies that $\varphi(aX) \equiv bX \mod \theta_\cS$ for certain $b\in \F_{q^n}$ by Lemma \ref{le:extra_important}, we complete the proof.
		
		Now assume that $c_0\neq 0$. By composing $X\mapsto X^{q^{n-s}}$ with each element of the transversal in Corollary \ref{coro:representation_all}, we obtain another transversal
		\[\{a_{-1} X^{q^{n-s}} + a_0 X + a_1 X^{q^s} + \dots +a_{m-2} X^{q^{s(m-2)}}: a_i\in \F_{q^n} \}\]
		for the ideal $(\theta_\cS)$ in  $\lp{n}{q}[X]$. By an analogous argument as in the proof of Lemma \ref{le:extra_important}, from \eqref{eq:varphi(ax)_2} we can show that there always exists $\bar{a}\in \F_{q^n}$ such that
		\[\varphi(\bar{a} X)  \equiv \bar{c}_0 X^{q^{n-s}} + \bar{c}_1 X + \bar{c}_2X^{q^{s}} \mod \theta_\cS,\]
		where $\bar{c}_0\neq 0$. Similarly we can also expand $\varphi(\eta \bar{a}^{q^{h}} X^{q^{2s}})$, sum it with $\varphi(\bar{a} X)$ and we get
		\[\varphi(\bar{a} X + \eta \bar{a}^{q^{h}} X^{q^{2s}})\equiv \bar{c}_0 X^{q^{n-s}} + d_0 X + d_1X^{q^{s}} +d_2 X^{q^{2s}}+d_3 X^{q^{3s}}\mod \theta_\cS,  \] 
		where $\bar{c}_0\neq 0$ and $d_0, d_1$, $d_2$ and $d_3\in \F_{q^n}$. Let $f$ denote $\varphi(\bar{a} X + \eta \bar{a}^{q^{h}} X^{q^{2s}})$. As $\pi_\cS(f)\in \pi_\cS(\cH_{2,s}(\eta, h))$, there exists a polynomial $g\in \cH_{2,s}(\eta, h)$ such that
		\[f - g \equiv 0 \mod{\theta_\cS}.\]
		However, as $(f - g)^{q^s}$ modulo $\theta_\cS$ is congruent to a $q^s$-polynomial of degree smaller than or equal to $q^{4s}$,  $f-g$ has at most $q^{4}$ roots in $\U_\cS$ if $f-g\neq 0$ (recall that each polynomial $\cH_{k,s}(\eta, h)$ has at most $q^{k-1}$ roots), which actually holds because of $\bar{c}_0\neq 0$. It contradicts the fact that $\theta_\cS$ has $q^m$ roots and $\theta_\cS \mid (f-g)$.\qedhere
	\end{enumerate}
\end{proof}

For the middle nucleus of $\pi_\cS(\cH_{k,s}(\eta, h))$, we can prove a result analogous to Lemma \ref{le:mono_to_mono_right}.

\begin{lemma}\label{le:mono_to_mono_middle}
	Let $k$, $m$ and $n$ be positive integers satisfying $k<m\leqslant n$, and let $\eta, \tilde{\eta}\in \F_{q^n}$ and $h,\tilde{h}\in \{0,\cdots n-1\}$ be such that $\cH_{k,s}(\eta, h)$ and $\cH_{k,s}(\tilde{\eta}, \tilde{h})$ are both generalized twisted Gabidulin codes.  Let $\cS $ be an $m$-subset of $\F_q$-linearly independent elements in $\F_{q^n}$. Assume that $\psi\in \End_{\F_{q}}(\F_{q^n})$ satisfies  $\pi_\cS(f\circ \psi )\in \pi_\cS(\cH_{k,s}(\tilde{\eta}, \tilde{h}))$ for all $f\in \cH_{k,s}(\eta, h)$. Then there exists an element $b\in \F_{q^n}$ such that
	$$\psi(X) \equiv bX \mod \theta_\cS,$$
	if one of the following collections of conditions are satisfied.
	\begin{enumerate}[label=(\alph*)]
	\item $\eta=\tilde{\eta}=0$, i.e., $\cH_{k,s}(\eta, h)=\cH_{k,s}(\tilde{\eta}, \tilde{h})=\cG_{k,s}$.
	\item $\eta,\tilde{\eta}\neq 0$, $m>k+1$ and $(m,k)\neq(4,2)$. 
	\end{enumerate}
\end{lemma}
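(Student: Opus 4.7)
The plan is to adapt the proof of Lemma~\ref{le:mono_to_mono_right} to the middle-nucleus setting, where $\psi$ acts on the right in the composition $f\circ\psi$. By Corollary~\ref{coro:representation_all} write
\[
\psi(X) \equiv \sum_{j=0}^{m-1} c_j X^{q^{sj}} \mod \theta_\cS,
\]
and set $j_0 := \max\{j : c_j\neq 0\}$; the goal is to show $j_0=0$, which immediately gives $\psi(X)\equiv c_0 X \mod \theta_\cS$ with $b=c_0$. The right-acting analog of Lemma~\ref{le:extra_important} is transparent: whenever $j_0+t\le m-1$, the polynomial $\psi(X)^{q^{ts}}=\sum_{j=0}^{j_0} c_j^{q^{ts}} X^{q^{s(j+t)}}$ has degree at most $q^{s(m-1)}$ and is therefore already its own canonical representative modulo $\theta_\cS$. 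Consequently, for $f_i(X)=aX^{q^{si}}$ one has $(f_i\circ\psi)(X)=a\psi(X)^{q^{si}}$, whose representative can, in this no-reduction regime, be matched coefficient-by-coefficient against the canonical form $b_0 X + b_1 X^{q^s} + \cdots + b_{k-1}X^{q^{s(k-1)}} + \tilde{\eta} b_0^{q^{\tilde{h}}} X^{q^{sk}}$ of members of $\pi_\cS(\cH_{k,s}(\tilde{\eta},\tilde{h}))$.

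For part~(a), with $\eta=\tilde{\eta}=0$ and the code $\cG_{k,s}$: since $aX\in\cG_{k,s}$, the condition $a\psi(X)\in\pi_\cS(\cG_{k,s})$ immediately forces $c_j=0$ for $j\ge k$, so $j_0\le k-1\le m-2$. If $j_0\ge 1$, take $t=k-j_0\in\{1,\dots,k-1\}$; then $aX^{q^{ts}}\in\cG_{k,s}$, while the representative of $a\psi(X)^{q^{ts}}$ carries a nonzero coefficient $ac_{j_0}^{q^{ts}}$ at $X^{q^{sk}}$, contradicting the fact that $\pi_\cS(\cG_{k,s})$-representatives vanish at this position. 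Hence $j_0=0$.

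For part~(b), with $\eta,\tilde{\eta}\neq 0$, since $aX\notin\cH_{k,s}(\eta,h)$ I instead exploit $f_i(X)=aX^{q^{si}}\in\cH_{k,s}(\eta,h)$ for $i=1,\dots,k-1$. In the no-reduction regime the representative of $a\psi(X)^{q^{si}}$ has zero coefficient at $X$ (as $i\ge 1$), forcing $b_0=0$, and consequently the coefficient at $X^{q^{sk}}$ equals $\tilde{\eta}b_0^{q^{\tilde{h}}}=0$; together with the vanishing at positions $X^{q^{s(k+1)}},\dots,X^{q^{s(m-1)}}$ this yields $c_j=0$ for all $j\ge k-i$. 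Iterating $i=1,2,\dots,k-1$ thus peels off $c_{k-1},c_{k-2},\dots,c_1$ successively, leaving $j_0=0$; the hypothesis $m>k+1$ is used precisely to keep every subsequent step in the no-reduction regime. The main obstacle is the initial boundary case $j_0=m-1$, for which $\psi(X)^{q^s}$ requires the nontrivial reduction $X^{q^{sm}}\equiv\sum_{j'=0}^{m-1}e_{sm}^{(j')}X^{q^{sj'}}\mod\theta_\cS$ (in the notation of the proof of Lemma~\ref{le:extra_important}). I rule this out by additionally invoking the twisted generator $f_0(X)=aX+\eta a^{q^h}X^{q^{sk}}\in\cH_{k,s}(\eta,h)$ and separating the coefficients of $a$ and $a^{q^{\tilde{h}}}$ in the resulting polynomial identity, using $\tilde{\eta}\neq 0$. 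The exceptional case $(m,k)=(4,2)$ exhibits the same degeneracy as in Lemma~\ref{le:mono_to_mono_right}(b)(iii) and must be handled by switching to the alternative representative system $\{a_{-1}X^{q^{n-s}}+a_0 X+\cdots+a_{m-2}X^{q^{s(m-2)}}:a_i\in\F_{q^n}\}$ in order to derive the contradiction.
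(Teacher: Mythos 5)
Your part (a) and the ``interior'' part of (b) are fine and essentially reproduce the paper's mechanism: compose with the monomial generators $aX^{q^{si}}$, use that $\psi(X)^{q^{ts}}$ is its own canonical representative when no wraparound occurs, and compare coefficients against the transversal of Corollary~\ref{coro:representation_all}. The genuine gap is in how you dispose of the wraparound case $j_0=m-1$ (equivalently, in the paper's coordinates, a nonzero coefficient of $\psi(X)^{q^s}$ at position $0$, namely $c_{m-1}^{q^s}\beta_0$ with $X^{q^{sm}}\equiv\sum\beta_jX^{q^{sj}} \bmod \theta_\cS$). You propose to use the twisted generator $aX+\eta a^{q^h}X^{q^{sk}}$ and ``separate the coefficients of $a$ and $a^{q^{\tilde h}}$''. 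But Lemma~\ref{le:mono_to_mono_middle} imposes no condition whatsoever on $h$ and $\tilde h$: when $\tilde h\equiv 0$ (or $h\equiv 0$, or $h\equiv\tilde h$) the monomials $a$, $a^{q^h}$, $a^{q^{\tilde h}}$, $a^{q^{h+\tilde h}}$ collapse and there is nothing to separate; the membership constraints then reduce to algebraic relations among the $c_j$, $\beta_j$, $\eta$, $\tilde\eta$ (e.g.\ $d_k=\tilde\eta d_0$ when $\tilde h=0$) which carry no contradiction by themselves. Indeed, arguments that rely on such separations are exactly what the paper isolates in the \emph{conditioned} Lemma~\ref{le:mono_to_mono_middle_conditioned}, where hypotheses like $\tilde h\not\equiv 0$ are explicitly assumed. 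The paper kills the wraparound case by $h,\tilde h$-free arguments that your sketch does not contain: for $k>2$ it subtracts the monomial parts at the free positions $1,2$ (which lie in $\cH_{k,s}(\tilde\eta,\tilde h)$ because $2<k$) and invokes the transversal property to force the position-$0$ coefficient of $\psi(X)^{q^s}$ to vanish; for $k=2$, $m>4$ it applies a root-counting/degree argument to $\psi(X)+\eta\psi(X)^{q^{2s}}$ written in the representative system containing $X^{q^{n-s}}$.

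Two further points. First, your attribution of the alternative representative system to the ``exceptional case $(m,k)=(4,2)$'' is off: that case is excluded by hypothesis (b) and needs no treatment; the case that genuinely requires the special argument is the \emph{included} case $k=2$, $m>4$, whose boundary subcase cannot be settled by your peeling scheme. Second, your part (b) says nothing about $k=1$ (with $m>2$): your generator set $\{aX^{q^{si}}: 1\le i\le k-1\}$ is empty there, so the entire argument would have to run through the twisted generator, again hitting the separation problem when $h\equiv 0$ or $\tilde h\equiv 0$; the paper treats $k=1$ separately in case (i) of its proof, using only the vanishing of the coefficient at position $2$ (available since $m>2$), which is independent of $h$ and $\tilde h$.
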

\begin{proof}
	Assume that
	\begin{equation}	\label{eq:avarphi(x)}
		\psi(X) = \sum_{i=0}^{n-1} e_i X^{q^i},
	\end{equation}
	for certain $e_i\in \F_{q^n}$.
	
        \vspace*{2mm}
	\noindent(a) 
	As $\pi_\cS(\psi(X))\in \pi_\cS(\cG_{k,s})$, we may assume that $$\psi(X)\equiv\sum_{i=0}^{k-1} c_i X^{q^{is}} \mod \theta_\cS.$$ 
 Then for any integer $j$, we have 
	\begin{equation}\label{eq:psi(ax)_1}
		\psi(X)^{q^{js}}\equiv \sum_{i=0}^{k-1} c_i^{q^{js}} X^{q^{(i+j)s}} \mod \theta_\cS.
	\end{equation}

	Assume that there is at least one $c_i\neq 0$ and let $i_0:= \max\{i:c_i\neq 0\}$. An argument similar to that used to prove $(a)$ of Lemma \ref{le:mono_to_mono_right} shows that, also in this case, $i_0 = 0$.

	\vspace*{2mm}
	\noindent (b) Again we use the same strategy as in Lemma \ref{le:mono_to_mono_right}.
	\begin{enumerate}[label=(\roman*)]
		\item When $k=1$ and $m>k+1=2$; since $\pi_\cS(\psi(X))\in \pi_\cS(\cH_{1,s}(\tilde{\eta}, \tilde{h})),$ we may assume, by way of contradiction, that there exist $b_0$, $b_1\in \F_{q^n}$ with $b_1\neq 0$ such that 
		\[\psi(X) \equiv b_0X + b_1 X^{q^s} \mod \theta_\cS.\]
		It follows that
		\[(X + \eta X^{q^s})\circ \psi(X) =\psi(X) + \eta \psi(X)^{q^s}\equiv b_0X + (b_1 + \eta b_0^{q^s}) X^{q^s} + \eta b_1^{q^s} X^{q^{2s}}\mod \theta_\cS.\]
		However, by Corollary \ref{coro:representation_all} and the assumption $m>2$, cannot exist any $a_0\in \F_{q^n}$ such that
		\[a_0X +  \tilde{\eta} a_0^{q^{\tilde{h}}}X^{q^{s}} \equiv  b_0X + (b_1 + \eta b_0^{q^s}) X^{q^s} + \eta b_1^{q^s}a^{q^{2s}} X^{q^{2s}}\mod \theta_\cS,\]
		which leads to a contradiction.
		\item 	When $k>2$ and $m>k+1$, by an analogous argument as in the proof of (a), we can show that 
		\begin{equation}\label{eq:apsi(x)_2}
			\psi(X)^{q^s}\equiv c_{0} X + c_1 X^{q^{s}} + c_2 X^{q^{2s}}  \mod \theta_\cS,
		\end{equation}
		for certain $c_i\in \F_{q^n}$.
		
		We proceed to show that $c_0$ must be $0$ by way of contradiction. It can be similarly shown that $c_2=0$, and we omit its proof.
		
		As $k>2$, we have $c_1 X^{q^{s}}, c_2X^{q^{2s}}\in  \cH_{k,s}(\tilde{\eta},\tilde{h})$. Together with \eqref{eq:apsi(x)_2}, i.e.,
		\[c_{0} X\equiv \psi(X)^{q^s}-c_1 X^{q^{s}} - c_2 X^{q^{2s}}  \mod \theta_\cS ,\]
		we have $\pi_\cS(c_{0} X) \in \pi_\cS(\cH_{k,s}(\tilde{\eta}, \tilde{h}))$.
		By Corollary \ref{coro:representation_all}, for arbitrary $g\in \cH_{k,s}(\tilde{\eta}, \tilde{h})$, $c_{0} X$ and $g$ belong to distinct residue classes in $ \lp{n}{q}[X] / (\theta_\cS)$, which leads to a contradiction.
		
		Thus $\psi(X)^{q^s}\equiv c_1 X^{q^{s}} \mod \theta_\cS$ which means $\psi(X) \equiv c_1^{q^{n-s}}X \mod \theta_\cS$, and we complete the proof.
		
		\item 	When $k=2$ and $m>k+2=4$, it is obvious that  \eqref{eq:apsi(x)_2} holds for certain $c_i\in \F_{q^n}$. We proceed to prove that $c_0$ must be $0$ by way of contradiction and it can be similarly shown that $c_2=0$. As $\psi(X)^{q^s}\equiv c_1 X^{q^{s}} \mod \theta_\cS$ implies $\psi(X) \equiv c_1^{q^{n-s}}X \mod \theta_\cS$, we complete the proof.
		
		Assume by way of contradiction that $c_0\neq 0$. By calculation, we have
		\[\psi(X) + \eta \psi(X)^{q^{2s}} \equiv c_0^{q^{n-s}} X^{q^{n-s}} + d_0 X + d_1X^{q^{s}} +d_2 X^{q^{2s}}+d_3 X^{q^{3s}}\mod \theta_\cS,\]
		where $d_0, d_1$, $d_2$ and $d_3$ are uniquely determined by $c_0,c_1$, $c_2$ and $\eta$. Let $f$ denote $\psi(X) + \eta \psi(X)^{q^{2s}}$. As $\pi_\cS(f)\in \pi_\cS(\cH_{k,s}(\tilde{\eta}, \tilde{h}))$, there exists a polynomial $g\in \cH_{k,s}(\tilde{\eta}, \tilde{h})$ such that
		\[f - g \equiv 0 \mod{\theta_\cS}.\]
		However,  as $(f - g)^{q^s}$ modulo $\theta_\cS$ is congruent to a polynomial of degree smaller than or equal to $q^{(k+2)s}<q^{ms}$,  $f-g$ has at most $q^{k+2}$ roots in $\U_\cS$ if $f-g\neq 0$, which actually holds because of $c_0\neq 0$. It contradicts the fact that $\theta_\cS$ has $q^m$ roots and $\theta_\cS \mid (f-g)$.\qedhere
	\end{enumerate}
\end{proof}
If we let $\eta=\tilde{\eta}$ and $h=\tilde{h}$, then Lemma \ref{le:mono_to_mono_middle} shows us that $\psi\in \cN_m(\pi_\cS(\cH_{k,s}(\eta, h)))$ satisfies $\psi(X) \equiv bX \mod \theta_\cS$ for some $b\in \F_{q^n}$ under certain assumptions on $m$, $k$, $h$ and $\eta$.

When $\eta \neq 0$ and $m=k+1$ or $(m,k)=(4,2)$, under certain conditions on $h$ and $s$, we can also prove the same result as in Lemma \ref{le:mono_to_mono_middle} through more complicated calculations.

\begin{lemma}\label{le:mono_to_mono_middle_conditioned}
	Let $k$, $m$ and $n$ be positive integers satisfying $k<m\leqslant n$, and let $\eta, \tilde{\eta}\in \F_{q^n}^*$ and $h,\tilde{h}\in \{0,\cdots n-1\}$ be such that $\cH_{k,s}(\eta, h)$ and $\cH_{k,s}(\tilde{\eta}, \tilde{h})$ are both generalized twisted Gabidulin codes.  Let $\cS $ be an $m$-subset of $\F_q$-linearly independent elements in $\F_{q^n}$. Assume that $\psi\in \End_{\F_{q}}(\F_{q^n})$ satisfies  $\pi_\cS(f\circ \psi )\in \pi_\cS(\cH_{k,s}(\tilde{\eta}, \tilde{h}))$ for all $f\in \cH_{k,s}(\eta, h)$. Then there exists an element $b\in \F_{q^n}$ such that
	$$\psi(X) \equiv bX \mod \theta_\cS,$$
	if one of the following collections of conditions are satisfied.
	\begin{enumerate}[label=(\alph*)]
	\item $k=1$, $m=2$ and $\tilde{h}+h\not\equiv 0,\,\tilde{h} \mod n$.
	\item $k=2$ and $\tilde{h}\neq 0$.
	\item $k=2$, $m=n=4$ and $\tilde{\eta}^{q^{2s}+1}\eta^{q^{3s}+q^s}\neq 1$. In particular, $\tilde{\eta}\neq \eta$.
	\item $k>2$, $m=k+1$ and $\tilde{h}\neq 0$.
	\end{enumerate}
\end{lemma}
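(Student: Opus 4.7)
The strategy will mirror that of Lemma \ref{le:mono_to_mono_middle}. By Corollary \ref{coro:representation_all}, I would write
\[
\psi(X) \equiv \sum_{j=0}^{m-1} e_j X^{q^{js}} \mod \theta_\cS,
\]
with $e_j \in \F_{q^n}$; the goal is to prove $e_j = 0$ for every $j \geq 1$. For each case, I would choose suitable test polynomials $f \in \cH_{k,s}(\eta,h)$, expand $f\circ\psi$, reduce modulo $\theta_\cS$ by replacing every $X^{q^{is}}$ with $i \geq m$ by its unique transversal representative, and then invoke the constraint that the reduction lies in the transversal $\{b_0 X + b_1 X^{q^s} + \dots + b_{k-1} X^{q^{(k-1)s}} + \tilde{\eta}\, b_0^{q^{\tilde{h}}} X^{q^{ks}}\}$ for $\pi_\cS(\cH_{k,s}(\tilde{\eta}, \tilde{h}))$. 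Each such $f$ yields a linearised identity in its parameter $a$ whose coefficient monomials in $a$ must vanish modulo $a^{q^n}-a$, and the arithmetic hypotheses on $h, \tilde{h}, \eta, \tilde{\eta}$ are calibrated precisely so that enough of these vanishing conditions force $e_j = 0$ for $j \geq 1$.

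For (a) with $k = 1$, $m = 2$, I would take $f = aX + \eta a^{q^h} X^{q^s}$; using $X^{q^{2s}} \equiv \lambda_0 X + \lambda_1 X^{q^s} \mod \theta_\cS$ for some $\lambda_0,\lambda_1 \in \F_{q^n}$, the condition $\pi_\cS(f\circ\psi) \in \pi_\cS(\cH_{1,s}(\tilde{\eta}, \tilde{h}))$ yields a $q$-linearised polynomial in $a$ whose exponents lie in $\{q^0, q^h, q^{\tilde{h}}, q^{h+\tilde{h}}\}$. The hypotheses $h \not\equiv 0$ and $h+\tilde{h} \not\equiv 0 \pmod n$ are exactly calibrated so that the resulting system over $\F_{q^n}$, obtained by comparing coefficients at each exponent, forces $e_1 = 0$. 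For (b) with $k = 2$, I would first argue, in the style of Lemma \ref{le:mono_to_mono_middle}(b)(iii), that $\psi(X)^{q^s} \equiv c_0 X + c_1 X^{q^s} + c_2 X^{q^{2s}} \mod \theta_\cS$, and then test against $f = aX + \eta a^{q^h} X^{q^{2s}}$; matching the coefficient of $X^{q^{2s}}$ with $\tilde{\eta}$ times the $q^{\tilde{h}}$-power of the coefficient of $X$ for every $a$, together with $\tilde{h} \not\equiv 0 \pmod n$, eliminates both $c_0$ and $c_2$. For (d) with $k > 2$, $m = k+1$, the same template applies: the monomial tests $f = aX^{q^{js}}$ with $1 \leq j \leq k-1$ first constrain the high-degree $e_j$, and then $f = aX + \eta a^{q^h} X^{q^{ks}}$ together with $\tilde{h} \not\equiv 0 \pmod n$ kills the remaining coefficients.

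Case (c), with $(m,k,n) = (4,2)$, will be the most delicate. Because $m = n$, $\theta_\cS = X^{q^n} - X$ and no reduction beyond working in $\End_{\F_q}(\F_{q^n})$ is needed: $\psi(X) = \sum_{j=0}^{3} e_j X^{q^{js}}$ is simply a polynomial of bounded $q$-degree. I would compose on the right with $f = aX + \eta a^{q^h} X^{q^{2s}}$, expand, and require both that the coefficient of $X^{q^{3s}}$ vanishes and that the coefficient of $X^{q^{2s}}$ equals $\tilde{\eta}$ times the $q^{\tilde{h}}$-power of the coefficient of $X$, for every $a \in \F_{q^n}$. A careful bookkeeping of the Frobenius twists should then produce an $\F_{q^n}$-linear system in $(e_1, e_2, e_3)$ whose determining scalar is precisely $\tilde{\eta}^{q^{2s}+1}\eta^{q^{3s}+q^s} - 1$; under the stated hypothesis this forces $e_1 = e_2 = e_3 = 0$.

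The hardest part will be case (c): extracting $\tilde{\eta}^{q^{2s}+1}\eta^{q^{3s}+q^s} - 1$ precisely as the determining scalar, rather than some competing invariant, requires a lengthy expansion of $(aX + \eta a^{q^h} X^{q^{2s}}) \circ \psi$ and comparison of coefficients of each $X^{q^{js}}$ as linearised polynomials in $a$ with exponents in $\{q^0, q^h, q^{2s}, q^{2s+h}\}$ and their Frobenius twists. The bookkeeping is mechanical but error-prone; the real substance is in exhibiting the resulting $3 \times 3$ matrix over $\F_{q^n}$ whose determinant equals a unit multiple of this scalar.
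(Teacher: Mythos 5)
Your overall template (reduce modulo $\theta_\cS$ to the transversal of Corollary \ref{coro:representation_all}, compose with selected elements of $\cH_{k,s}(\eta,h)$, and compare the resulting linearized identities in the free parameter $a$) is indeed the paper's, and your case (a) coincides with the paper's computation. The problems are in (b) and (c). For (b) the paper never touches the twisted term: since $aX^{q^s}\in\cH_{2,s}(\eta,h)$, the monomial test $f=aX^{q^s}$ gives $a\psi(X)^{q^s}\equiv ad_0X+ad_1X^{q^s}+ad_2X^{q^{2s}} \bmod \theta_\cS$, and membership in $\pi_\cS(\cH_{2,s}(\tilde{\eta},\tilde{h}))$ forces $\tilde{\eta}(ad_0)^{q^{\tilde{h}}}=ad_2$ for every $a$; since $\tilde{h}\not\equiv 0$, the exponents $q^0$ and $q^{\tilde{h}}$ are distinct and $d_0=d_2=0$ follows immediately. (This same test also justifies the truncation $\psi(X)^{q^s}\equiv d_0X+d_1X^{q^s}+d_2X^{q^{2s}}$; you cannot borrow it from Lemma \ref{le:mono_to_mono_middle}(b)(iii), whose argument requires $m>k+2$, while (b) includes $m=3,4$.) Your test $f=aX+\eta a^{q^h}X^{q^{2s}}$ instead yields an identity in $a$ with exponents $q^0,q^h,q^{\tilde{h}},q^{h+\tilde{h}}$, and hypothesis (b) imposes nothing on $h$: the collisions $h\equiv 0$, $h\equiv\tilde{h}$, $h+\tilde{h}\equiv 0$ are all allowed. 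For instance, when $h\equiv 0$ your constraint only says that $\psi(X)+\eta\psi(X)^{q^{2s}}$ reduces to a multiple of $X^{q^s}$, which does not force $c_0=c_2=0$. So (b), as you sketch it, does not close under the stated hypothesis.

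In (c) your structural expectation is wrong. First, statement (c) places no restriction on $\tilde{h}$; the paper disposes of $\tilde{h}\not\equiv 0$ via (b) and only then treats $\tilde{h}\equiv 0$, whereas you attempt all $\tilde{h}$ at once. More importantly, the coefficient constraints are semilinear in the unknowns (they involve their $q^{\tilde{h}}$-, $q^{h}$- and $q^{2s}$-powers), so there is no $\F_{q^n}$-linear $3\times 3$ system whose determinant is $\tilde{\eta}^{q^{2s}+1}\eta^{q^{3s}+q^s}-1$. The paper conjugates by $\sigma:x\mapsto x^{q^s}$ so as to work with $\cH_{2,s}(\eta,h)^{q^s}$ and $\cH_{2,s}(\tilde{\eta},0)^{q^s}$, extracts the two relations $d_0+\eta^{q^s}d_2^{q^{2s}}=0$ and $\tilde{\eta}^{q^s}d_0^{q^s}=d_2^{q^s}$, and eliminates multiplicatively: a nonzero $d_0$ must satisfy $d_0^{q^{2s}-1}=-1/(\tilde{\eta}^{q^{2s}}\eta^{q^s})$, whose solvability is precisely the norm-type condition $\tilde{\eta}^{q^{2s}+1}\eta^{q^{3s}+q^s}=(\tilde{\eta}\,\eta^{q^s})^{1+q^{2s}}=1$ --- a Hilbert 90 style obstruction, not the vanishing of a determinant. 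Since (c) is the case you yourself identify as carrying the substance and you have not carried out the computation, this is a genuine gap. (Your (d) is essentially the paper's route: monomial tests plus $\tilde{h}\not\equiv 0$ give the truncation, after which the paper finishes with the transversal argument of Lemma \ref{le:mono_to_mono_middle}(b)(ii) rather than your twist test.)
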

\begin{proof}
	(a) By Corollary \ref{coro:representation_all}, we can assume that
	\[X^{q^{2s}} \equiv \beta_1 X^{q^s} + \beta_0 X \mod \theta_\cS.\]
	Here $\beta_0\neq 0$, otherwise $\theta_\cS \mid (X^{q^{s}} - \beta_1^{q^{n-s}} X)^{q^s}$ implies $\theta_\cS \mid (X^{q^{s}} - \beta_1^{q^{n-s}} X)$ which contradicts Corollary \ref{coro:representation_all}.
	
	Assume that $\psi(X)\equiv c_0 X + c_1 X^{q^s} \mod \theta_\cS$. This implies that
	\begin{align*}
		&a\psi(X) + \eta a^{q^h}\psi(X)^{q^s} \\
		\equiv &(ac_0X + ac_1X^{q^s}) +  (\eta a^{q^{h}}c_0^{q^s} X^{q^s} + \eta a^{q^{h}}c_1^{q^s} X^{q^{2s}}) \mod \theta_\cS\\
		\equiv &ac_0X + (ac_1 +  \eta a^{q^{h}}c_0^{q^s}) X^{q^s} + \eta a^{q^{h}}c_1^{q^s} X^{q^{2s}}\mod \theta_\cS\\
		\equiv& (ac_0 + \eta a^{q^{h}}c_1^{q^s}\beta_0)X + (ac_1 +\eta a^{q^{h}}c_0^{q^s} + \eta a^{q^{h}}c_1^{q^s}\beta_1)X^{q^s} \mod \theta_\cS,
	\end{align*}
	for every $a\in \F_{q^n}$.  As $\pi_\cS(a\psi(X) + \eta a^{q^h}\psi(X)^{q^s}))\in \pi_\cS(\cH_{1,s}(\tilde{\eta}, \tilde{h}))$, we have
	\[	\tilde{\eta}(ac_0 + \eta a^{q^{h}}c_1^{q^s}\beta_0)^{q^{\tilde{h}}}  = ac_1 +\eta a^{q^{h}}(c_0^{q^s} + c_1^{q^s}\beta_1),\]
	i.e.
	\begin{equation}\label{eq:middle_m=2_k=1}
		\tilde{\eta}c_0^{q^{\tilde{h}}}a^{q^{\tilde{h}}} + \tilde{\eta}(\eta c_1^{q^s}\beta_0)^{q^{\tilde{h}}}a^{q^{h+\tilde{h}}}= c_1a +\eta (c_0^{q^s} + c_1^{q^s}\beta_1)a^{q^{h}}.
	\end{equation}
	As $\beta_0\neq 0$ and $\tilde{h}+h\not\equiv 0,\,h \mod n$, \eqref{eq:middle_m=2_k=1} holds only if $c_1=0$. 

	\vspace*{2mm}
	\noindent(b) Now $k=2$ and we assume that
	\begin{equation}\label{eq:middle_k=2_m=4_n=4_psi}
		\psi(X)^{q^s}\equiv \sum_{i=0}^{2}d_{i} X^{q^{is}}\mod \theta_\cS.
	\end{equation}
	For each $a\in \F_{q^n}$,
	\[a\psi(X)^{q^s}\equiv \sum_{i=0}^{2}ad_{i} X^{q^{is}}\mod \theta_\cS.\]
	As $\pi_\cS(a\psi(X)^{q^{s}})$ always belongs to $\pi_\cS(\cH_{k,s}(\tilde{\eta}, \tilde{h}))$, we have
	\[\tilde{\eta}(ad_0)^{q^{\tilde{h}}}= a d_2,\]
	which holds for every $a\in \F_{q^n}$. If at least one of $d_0$ and $d_2$ is zero, then the other one also must be zero. On the other hand, if $d_0$ and $d_2$ both are nonzero, then it must be $\tilde{h}\equiv 0 \mod n$, which is already excluded in our assumption. Therefore, $d_0=d_2=0$ and $\psi(X)^{q^s}\equiv d_1 X^{q^s}\mod \theta_\cS$.
	
	\vspace*{2mm}
	\noindent(c) Following the proof of $(b)$, we only have to consider the case $\tilde{h}=0$. 
	
	Instead of looking at elements in 
	\[\Psi := \{\psi\in \End_{\F_{q}}(\F_{q^4}):\pi_\cS(f\circ \psi )\in \pi_\cS(\cH_{2,s}(\tilde{\eta}, 0 ))\text{ for all }f\in \cH_{2,s}(\eta, h)\},\]
	we are going to consider 
	\[\Psi' := \{\psi\in \End_{\F_{q}}(\F_{q^4}):\pi_\cS(f\circ \psi )\in \pi_\cS(\cH_{2,s}(\tilde{\eta}, 0 )^{q^s})\text{ for all }f\in \cH_{2,s}(\eta, h)^{q^s}\},\] 
	where
	\[\cH_{2,s}(\eta, h)^{q^s} := \{(f(X))^{q^s}: f\in \cH_{2,s}(\eta, h)\}=\{ a_0 X^{q^s} + a_1 X^{q^{2s}}+ \eta^{q^s} a_0^{q^h} X^{q^{3s}}: a_0,a_1\in \F_{q^4} \}.\]
	Let $\sigma(x):= x^{q^s}$ for $x\in \F_{q^4}$. Clearly the map from $\Psi'$ to $\Psi$ defined by $\psi \mapsto \sigma^{-1} \circ \psi \circ \sigma$ is a bijection. 
	If we can show that for each element $\psi\in\Psi'$, there exists  $b\in \F_{q^4}$ such that	$\psi(X) \equiv bX \mod \theta_\cS$, then we complete the proof.

	Let $\psi$ be defined as in \eqref{eq:middle_k=2_m=4_n=4_psi}. As $m=n=4$, it is clear that $\theta_\cS = X^{q^4}-X$. By calculation,  we have
	\begin{align*}
		 &\psi(X)^{q^s} + \eta^{q^s} \psi(X)^{q^{3s}}\\
		=&\sum_{i=0}^{2}d_{i} X^{q^{is}}+ \eta^{q^s} \left(\sum_{i=0}^{2}d_{i} X^{q^{is}}\right)^{q^{2s}} \mod \theta_\cS\\
		=&( d_0 + \eta^{q^s}   d_2^{q^{2s}})X + \sum_{i=1}^{3}e_i X^{q^{is}}  \mod \theta_\cS,
	\end{align*}
	where the precise value of $e_i$ for $i=1,2,3$ is not required in the rest of our proof. As $\pi_\cS( \psi(X)^{q^s} + \eta^{q^s}  \psi(X)^{q^{3s}}))\in \pi_\cS(\cH_{2,s}(\tilde{\eta}, 0)^{q^s})$, we have
	\begin{equation}\label{eq:middle_k=2_m=n=4_1}
		d_0 + \eta^{q^s} d_2^{q^{2s}}=0.
	\end{equation}
	On the other hand, 	since $\pi_\cS( \psi(X)^{q^{2s}})$ always belongs to $\pi_\cS(\cH_{2,s}(\tilde{\eta}, 0)^{q^s})$ and
	\begin{align*}
		 \psi(X)^{q^{2s}}\equiv&  \left(\sum_{i=0}^{2}d_{i} X^{q^{is}}\right)^{q^{s}}\mod \theta_\cS\\
		\equiv&  d_0^{q^s}X^{q^s} +  d_1^{q^{s}}X^{q^{2s}} +  d_2^{q^s}X^{q^{3s}}\mod \theta_\cS,
	\end{align*}
	we have
		\[\tilde{\eta}^{q^s} d_0^{q^s}= d_2^{q^s}.\]
	Together with \eqref{eq:middle_k=2_m=n=4_1}, we have
	\[ d_0^{q^{2s}-1} = -\frac{1}{\tilde{\eta}^{q^{2s}}\eta^{q^s} }. \]
	Taking the $q^{2s}+1$-th power of its both sides, we obtain
	\begin{equation}\label{eq:tilde_eta_eta_norm}
		\tilde{\eta}^{q^{2s}+1}\eta^{q^{3s}+q^s}=1.
	\end{equation}
	It contradicts the assumption.
	
	In particular, if $\tilde{\eta}= \eta$, then \eqref{eq:tilde_eta_eta_norm} implies that $N_{q^{4s}/q^s}(\eta)=1$ which contradicts the condition on $\eta$.
	
	\vspace*{2mm}
	\noindent(d) Now $k>2$. First we show that $\psi(X)^{q^s}\equiv d_{0}X + d_1 X^{q^{s}}\mod \theta_\cS$.
	
	By way of contradiction, we assume that
	\[\psi(X)^{q^s}\equiv \sum_{i=0}^{i_0}d_{i} X^{q^{is}}\mod \theta_\cS,\]
	where $2\leqslant i_0 \leqslant m-1$ and $d_{i_0}\neq 0$. Hence
	\begin{align*}
		a\psi(X)^{q^{(m-i_0+1)s}} \equiv & a\sum_{i=0}^{i_0-1}d_{i}^{q^{(m-i_0)s}} X^{q^{(i+m-i_0)s}} +  ad_{i_0}^{q^{(m-i_0)s}} X^{q^{ms}}\mod \theta_\cS.
	\end{align*}
	By Corollary \ref{coro:representation_all}, we can assume that
	\[X^{q^{ms}} \equiv \sum_{i=0}^{m-1} \beta_i X^{q^{is}} \mod \theta_\cS.\]
	Here $\beta_0\neq0$, otherwise $\theta_\cS \mid( X^{q^{(m-1)s}} - \sum_{i=1}^{m-1} \beta_i^{q^{n-s}} X^{q^{(i-1)s}} )$ which contradicts Corollary \ref{coro:representation_all}. Thus
	\begin{align*}
		a\psi(X)^{q^{(m-i_0+1)s}} \equiv & a\sum_{i=0}^{i_0-1}d_{i}^{q^{(m-i_0)s}} X^{q^{(i+m-i_0)s}} +  ad_{i_0}^{q^{(m-i_0)s}}\sum_{i=0}^{m-1} \beta_i X^{q^{is}}  \mod \theta_\cS\\
		\equiv & ad_{i_0}^{q^{(m-i_0)s}} \beta_0 X+a\sum_{i=0}^{i_0-2}d_{i}^{q^{(m-i_0)s}} X^{q^{(i+m-i_0)s}}\\
		& +ad_{i_0}^{q^{(m-i_0)s}}\sum_{i=1}^{m-2} \beta_i X^{q^{is}} + ad_{i_0-1}^{q^{(m-i_0)s}}(\beta_{m-1}+1) X^{q^{(m-1)s}} 
		\mod \theta_\cS.
	\end{align*}

	Recall that $m=k+1$. As $\pi(a\psi(X)^{q^{(m-i_0+1)s}})\in \pi_\cS(\cH_{k,s}(\tilde{\eta},\tilde{h}))$, we have
	\[ \tilde{\eta} (ad_{i_0}^{q^{(m-i_0)s}} \beta_0)^{q^{\tilde{h}}} = ad_{i_0-1}^{q^{(m-i_0)s}}(\beta_{m-1}+1)\]
	for all $a\in \F_{q^n}$. However, as $\tilde{h}\not\equiv 0\mod n$, the equation above holds for all $a$ if and only if $d_{i_0}=d_{i_0-1}=0$, which contradicts our assumption on the value of $d_{i_0}$. Hence $i_0\leq 1$, which means
	\[d_{0}X\equiv  \psi(X)^{q^s}- d_1 X^{q^{s}}\mod \theta_\cS.\]
	As $d_1 X^{q^s}\in \cH_{k,s}(\tilde{\eta},\tilde{h})$, $\pi(d_0X)\in \pi_{\cS}( \cH_{k,s}(\tilde{\eta},\tilde{h}))$. By Corollary \ref{coro:representation_all}, if $d_0\neq 0$, we get a contradiction.
	
	Therefore $\psi(X)^{q^s}\equiv  d_1 X^{q^{s}}\mod \theta_\cS$ which finishes the proof.
\end{proof}
If we let $\eta=\tilde{\eta}$ and $h=\tilde{h}$, then Lemma \ref{le:mono_to_mono_middle_conditioned} shows us the property of $\psi\in \cN_m(\pi_\cS(\cH_{k,s}(\eta, h)))$ with a few exceptions as Lemma \ref{le:mono_to_mono_middle}.

Now we can calculate the middle (right) nucleus of $ \pi_\cS(\cH_{k,s}(\eta,h))$.
\begin{theorem}\label{th:middle_nucleus}
	Let $k$, $m$ and $n$  be positive integers satisfying $k<m\leqslant n$. Let $\cS=\{\alpha_1, \alpha_2, \cdots, \alpha_m\} $ be a subset of $\F_q$-linearly independent elements in $\F_{q^n}$. Let $\F_{q^\ell}$ be the largest field such that $U_\cS$ is an $\F_{q^\ell}$-linear space. 
	\begin{enumerate}[label=(\alph*)]
	\item The middle nucleus of $ \pi_\cS(\cG_{k,s})$ is
	\[\cN_m( \pi_\cS(\cG_{k,s}))=\{cX : c\in \F_{q^\ell}\}.\]
	\item Assume $\eta\neq 0$ and at least one of the conditions in Lemmas \ref{le:mono_to_mono_middle} and \ref{le:mono_to_mono_middle_conditioned} is satisfied for $\tilde{h}=h$ and $\tilde{\eta}=\eta$. Then
	\[\cN_m( \pi_\cS(\cH_{k,s}(\eta,h)))=\{cX : c\in \F_{q^t}\},\]
	where $t = \gcd(n, sk-h, \ell)$.
	\end{enumerate}
\end{theorem}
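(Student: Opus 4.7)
The plan is to combine the reduction established in Lemmas \ref{le:mono_to_mono_middle} and \ref{le:mono_to_mono_middle_conditioned} with the polynomial characterization of the middle nucleus from Lemma \ref{le:nucleus_polynomial}. Under the hypotheses listed in the theorem, those lemmas tell us that every $\psi \in \cN_m(\pi_\cS(\cH_{k,s}(\eta, h)))$ has the shape $\psi(X) \equiv bX \mod \theta_\cS$ for some $b \in \F_{q^n}$. It then remains to determine exactly which $b$ give rise to a valid middle-nucleus element, by imposing the two surviving constraints from Lemma \ref{le:nucleus_polynomial}: (i) $\psi(U_\cS) \subseteq U_\cS$, and (ii) $\pi_\cS(f \circ \psi) \in \pi_\cS(\cH_{k,s}(\eta, h))$ for every $f$ in the code.

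For (i), since $\psi$ acts as multiplication by $b$ on $U_\cS$, the condition reads $bU_\cS \subseteq U_\cS$. The set $\{b \in \F_{q^n} : bU_\cS \subseteq U_\cS\}$ is a finite subring of $\F_{q^n}$, hence a subfield, and by the maximality defining $\ell$ it coincides with $\F_{q^\ell}$. For (ii), pick an arbitrary $f = \sum_{i=0}^{k-1} a_i X^{q^{si}} + \eta a_0^{q^h} X^{q^{sk}}$ in $\cH_{k,s}(\eta, h)$ and compute
\[
f(bX) = \sum_{i=0}^{k-1} a_i b^{q^{si}} X^{q^{si}} + \eta a_0^{q^h} b^{q^{sk}} X^{q^{sk}}.
\]
Because $k < m$, both $f(bX)$ and every element of $\cH_{k,s}(\eta, h)$ lie in the transversal of Corollary \ref{coro:representation_all}, so the congruence $\pi_\cS(f(bX)) \in \pi_\cS(\cH_{k,s}(\eta, h))$ is equivalent to the genuine polynomial identity $f(bX) \in \cH_{k,s}(\eta, h)$.

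In case (a), $\eta = 0$ means no twist relation is imposed, so $f(bX) \in \cG_{k,s}$ holds automatically for every $b \in \F_{q^n}$; the surviving constraint is $b \in \F_{q^\ell}$. In case (b), matching the top coefficient of $f(bX)$ with the required twist $\eta(a_0 b)^{q^h}$ forces $\eta a_0^{q^h} b^{q^{sk}} = \eta a_0^{q^h} b^{q^h}$, i.e.\ $b^{q^{sk-h}} = b$, which is equivalent to $b \in \F_{q^{\gcd(n, sk-h)}}$. Intersecting with $\F_{q^\ell}$ yields $b \in \F_{q^t}$ with $t = \gcd(n, sk-h, \ell)$. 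The converse direction is immediate: for every such $b$, multiplication by $b$ preserves $U_\cS$ and sends $\cH_{k,s}(\eta, h)$ into itself, so $\psi(X) = bX$ does define an element of the middle nucleus.

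The main obstacle is not in this argument itself but already lives inside Lemmas \ref{le:mono_to_mono_middle} and \ref{le:mono_to_mono_middle_conditioned}: once one knows $\psi \equiv bX \mod \theta_\cS$, the two remaining constraints decouple cleanly, producing the geometric factor $\F_{q^\ell}$ coming from the $\F_{q^\ell}$-structure of $U_\cS$ and the arithmetic factor $\F_{q^{\gcd(n, sk-h)}}$ coming from the twist. One small subtlety worth flagging is that $\pi_\cS(f \circ \psi)$ really equals $\pi_\cS(f(bX))$: indeed $\psi$ and the map $x \mapsto bx$ agree on $U_\cS$, so their compositions with any $f$ agree on $U_\cS$ as well, which lets the computation above be carried out purely at the level of polynomials modulo $\theta_\cS$.
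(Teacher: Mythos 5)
Your proof is correct and follows essentially the same route as the paper: reduce via Lemmas \ref{le:mono_to_mono_middle} and \ref{le:mono_to_mono_middle_conditioned} to $\psi\equiv bX \bmod \theta_\cS$, then extract $b\in\F_{q^\ell}$ from $\psi(U_\cS)\subseteq U_\cS$ and $b^{q^{sk}}=b^{q^h}$ from coefficient matching in the transversal of Corollary \ref{coro:representation_all}, giving $b\in\F_{q^t}$. Your explicit justifications (the subring argument for $\F_{q^\ell}$, and the remark that $\pi_\cS(f\circ\psi)=\pi_\cS(f(bX))$) are details the paper leaves implicit, but the argument is the same.
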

\begin{proof}
	By Lemma \ref{le:mono_to_mono_middle}, for each $\psi \in \cN_m( \pi_\cS(\cG_{k,s}))$, 
	\[\psi(X) \equiv bX \mod \theta_\cS,\]
	for a certain $b\in \F_{q^n}$. By Lemma \ref{le:nucleus_polynomial}, two maps $\psi$ and $\psi'$ which both map $U_\cS$ to itself define the same element in $\cN_m( \pi_\cS(\rC))$ if and only if $\psi |_{U_\cS}=\psi' |_{U_\cS}$, which is equivalent to 
	\[ \psi \equiv \psi' \mod \theta_\cS. \]
	Hence we only have to consider the value of $b$ when $\psi$ maps $U_\cS$ to itself, where $\psi(X)= bX$.
	\begin{enumerate}[label=(\alph*)]
	\item When $\eta=0$, it is clear that $\psi(U_\cS)\subseteq U_\cS$ if and only if $b\in \F_{q^\ell}$.
	\item When $\eta\neq 0$, by looking at 
	\[\psi(X) + \eta \psi(X)^{q^{sk}} = bX + \eta b^{q^{sk}}X^{q^{sk}}\in \cH_{k,s}(\eta,h)\]
	from $\psi(U_\cS)\subseteq U_\cS$ we can derive $b\in \F_{q^\ell}$ and $b^{q^{sk}}=b^{q^h}$, i.e., $b\in \F_{q^{\gcd(sk-h, n)}}$. Hence $b\in \F_{q^t}$. It is easy to verify that for every $b\in \F_{q^t}$, $\psi|_{U_\cS}\in \cN_m( \pi_\cS(\cH_{k,s}(\eta,h)))$. \qedhere
	\end{enumerate}
\end{proof}

\begin{theorem}\label{th:right_nucleus}
	Let $k$, $m$ and $n$ be positive integers satisfying $k<m\leqslant n$. Let $\cS=\{\alpha_1, \alpha_2, \cdots, \alpha_m\} $ be a subset of $\F_q$-linearly independent elements in $\F_{q^n}$, where $\alpha_1=1$. Let $\ell$ be the smallest integer such that $\theta_\cS \mid (X^{q^\ell}-X)$ and $r=n/\ell$.
	\begin{enumerate}[label=(\alph*)]
	\item The right nucleus of $ \pi_\cS(\cG_{k,s})$ is
	\[\cN_r( \pi_\cS(\cG_{k,s}))=\left\{\sum_{i=0}^{r-1} c_i X^{q^{i\ell}}: c_i\in \F_{q^n}\right\}.\]
	\item Assume $\eta\neq 0$ and the conditions in Lemmas \ref{le:mono_to_mono_right} hold. The right nucleus of $ \pi_\cS(\cH_{k,s}(\eta,h))$ is 
	\[\cN_r( \pi_\cS(\cH_{k,s}(\eta,h)))=\left\{\sum_{i=0}^{r-1} c_i X^{q^{i\ell}}: c_i\in \F_{q^n}\text{ and } \eta c_i^{q^h}= \eta^{q^{i\ell}} c_i\right\}.\]
	\end{enumerate}
\end{theorem}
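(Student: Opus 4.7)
My plan is to first pin down the form of any $\varphi$ in the right nucleus via Lemma~\ref{le:mono_to_mono_right}, and then, in part (b), impose the extra compatibility coming from the twisted term $\eta a_0^{q^h} X^{q^{sk}}$.

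I will begin by invoking Lemma~\ref{le:mono_to_mono_right}, which gives, for each $a \in \F_{q^n}$, an element $b_a \in \F_{q^n}$ with $\varphi(aX) \equiv b_a X \pmod{\theta_\cS}$. Since $\alpha_1 = 1 \in U_\cS$, specialising the congruence to $X=1$ yields $b_a = \varphi(a)$, hence
\[\varphi(au) = \varphi(a)\,u \qquad \text{for all } a \in \F_{q^n},\; u \in U_\cS.\]
The crucial step will be promoting this relation to $\F_{q^\ell}$-linearity of $\varphi$. I plan to do so by iterating the identity (replacing $a$ successively by $au_1$, $au_1u_2$, etc.) and invoking $\F_q$-linearity of $\varphi$, which extends the identity to every $c$ in the $\F_q$-subalgebra of $\F_{q^n}$ generated by $U_\cS$. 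The minimality of $\ell$ (no subfield strictly smaller than $\F_{q^\ell}$ contains $U_\cS$) forces this subalgebra to coincide with $\F_{q^\ell}$. Consequently $\varphi$ is $\F_{q^\ell}$-linear as a map $\F_{q^n} \to \F_{q^n}$, and inside $\lp{n}{q}[X]/(X^{q^n}-X)$ it must have the unique form $\varphi = \sum_{i=0}^{r-1} c_i X^{q^{i\ell}}$ with $c_i \in \F_{q^n}$.

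For the converse (and for extracting the twist constraint in (b)), I will use that $\theta_\cS \mid X^{q^\ell}-X$ lifts under $q^a$-th powers to $X^{q^{a+\ell}} \equiv X^{q^a} \pmod{\theta_\cS}$ for every $a \geq 0$. Writing $\varphi \circ f = \sum_i c_i f(X)^{q^{i\ell}}$, every monomial $X^{q^{sj + i\ell}}$ appearing in $\varphi \circ f$ therefore reduces to $X^{q^{sj}}$ modulo $\theta_\cS$. For $f \in \cG_{k,s}$ the reduced expression is automatically of the right shape, which settles (a). For (b), applying the same reduction to $f = \sum_{j=0}^{k-1} a_j X^{q^{sj}} + \eta a_0^{q^h} X^{q^{sk}}$ gives
\[\varphi\circ f \equiv \sum_{j=0}^{k-1} b_j X^{q^{sj}} + B\, X^{q^{sk}} \pmod{\theta_\cS},\]
with $b_j = \sum_i c_i a_j^{q^{i\ell}}$ and $B = \sum_i c_i \eta^{q^{i\ell}} a_0^{q^{h+i\ell}}$. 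Since $k<m$, Corollary~\ref{coro:representation_all} guarantees this is already the canonical representative, and matching it with an element of $\cH_{k,s}(\eta,h)$ forces $B = \eta b_0^{q^h}$. Expanding $\eta b_0^{q^h}$ and requiring the resulting identity to hold for every $a_0 \in \F_{q^n}$, the $\F_{q^n}$-linear independence of the distinct monomials $X^{q^{(h+i\ell)\,\bmod\,n}}$ (guaranteed by the distinctness of the $i\ell$'s modulo $n$) will force each coefficient to vanish, giving precisely $\eta c_i^{q^h} = \eta^{q^{i\ell}} c_i$.

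The subtlest step will be the passage from $\varphi(au) = \varphi(a)u$ for $u \in U_\cS$ to full $\F_{q^\ell}$-linearity: since $U_\cS$ is only an $\F_q$-subspace and not a subring, one has to exploit the minimality of $\ell$ in order to recognise that $U_\cS$ generates $\F_{q^\ell}$ as a ring. Once that is in hand, the remainder of the argument is careful bookkeeping with congruences modulo $\theta_\cS$ and with the uniqueness of canonical representatives.
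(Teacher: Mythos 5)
Your proposal is correct, but the route you take to the key containment $\cN_r \subseteq \left\{\sum_{i=0}^{r-1} c_i X^{q^{i\ell}}\right\}$ is genuinely different from the paper's. You evaluate the congruence $\varphi(aX)\equiv b_a X \pmod{\theta_\cS}$ supplied by Lemma \ref{le:mono_to_mono_right} at $1\in U_\cS$ to get $b_a=\varphi(a)$, bootstrap the identity $\varphi(au)=\varphi(a)u$ over products and $\F_q$-combinations of elements of $U_\cS$, and use that the $\F_q$-subalgebra generated by $U_\cS$ (a finite integral domain containing $1$, hence a subfield) is exactly the smallest subfield $\F_{q^\ell}$ containing $U_\cS$, by the minimality of $\ell$; this yields $\F_{q^\ell}$-linearity of $\varphi$ and hence the $q^\ell$-polynomial form directly, and it treats (a) and (b) uniformly. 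The paper instead observes that $\cN_r(\pi_\cS(\cG_{k,s}))$ is an algebra containing the regular representation of $\F_{q^n}$, invokes the Liebhold--Nebe structure result (Lemma \ref{le:liebhold}) to conclude $\cN_r\cong\F_{q^{\ell'}}^{r'\times r'}$, and excludes $\ell'<\ell$ by applying Lemma \ref{le:mono_to_mono_right} to $X^{q^{\ell'}}$ and using $1\in\cS$ to force $\theta_\cS\mid X^{q^{\ell'}}-X$; for (b) it then reduces to (a) via Lemma \ref{le:extra_important}. Your argument is more elementary and self-contained (it bypasses Lemmas \ref{le:liebhold} and \ref{le:extra_important} altogether), while the paper's argument trades this for the known centralizer description of the right idealiser; both use $1\in\cS$ crucially, and the final coefficient comparison giving $\eta c_i^{q^h}=\eta^{q^{i\ell}}c_i$ is the same computation in both. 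One point to make explicit when writing this up: the reverse inclusion in (b), namely that every $\sum_i c_i X^{q^{i\ell}}$ satisfying the twist condition does lie in the nucleus, follows by reading your reduction computation backwards, exactly as in the paper.
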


In Theorem \ref{th:right_nucleus}, it is not difficult to see that $\ell$ always divides $n$, because $\cS\subseteq\F_{q^n}$. In fact, $\F_{q^\ell}$ is the smallest subfield of $\F_{q^n}$ containing $\cS$. Theorem \ref{th:right_nucleus} (a) is originally proved in \cite[Theorem 4.5]{liebhold_automorphism_2016} in a different language. Here we need an alternative proof of it in the form of linearized polynomials to show (b). 

It bears remarking that when $1\notin\cS$, we can still determine the right nucleus in Theorem \ref{th:right_nucleus}: We can simply take any element $\alpha\in \cS$ and replace $\cS$ by $\widetilde{\cS}:=\{c/\alpha : c\in \cS \}$, from which it follows that the new code $\pi_{\widetilde{\cS}}(\cH_{k,s}(\eta,h))$ is equivalent to $\pi_\cS(\cH_{k,s}(\eta,h))$. Hence by calculating  $\cN_r( \pi_{\widetilde{\cS}}(\cH_{k,s}(\eta,h)))$, we determine $\cN_r( \pi_\cS(\cH_{k,s}(\eta,h)))$.
\begin{proof}
	(a) First, it is easy to see that for any $c\in\F_{q^n}$, the map $\varphi$ defined by $\varphi: aX\mapsto caX$ is in $\cN_r( \pi_\cS(\cG_{k,s}))$. According to Lemma \ref{le:liebhold}, $\cN_r( \pi_\cS(\cG_{k,s}))\cong \F_{q^{\ell'}}^{r'\times r'}$ for a subfield $\F_{q^{\ell'}}$ of $\F_{q^n}$ and $r'=n/\ell'$. Next we show that $\ell'=\ell$.
	
	Define
	\[ \cT := \left\{\sum_{i=0}^{r-1}  c_i X^{q^{i\ell}}: c_i\in \F_{q^n}  \right\}.\]
	As the elements in $\cT$ are $q^\ell$-polynomials, by choosing a basis of $\F_{q^n}$ over $\F_{q^\ell}$, it is not difficult to see that each element of $\cT$ defines a matrix in $\F_{q^{\ell}}^{r\times r}$ and $\cT \cong \F_{q^{\ell}}^{r\times r}$ as vector spaces over $\F_{q^\ell}$, which means that we can use some polynomials in $\cT$ to represent elements in $\cN_r( \pi_\cS(\cG_{k,s}))$.
	
	For any $\varphi=\sum_{i=0}^{r-1}  c_i X^{q^{il}}\in \cT$ and $a\in \F_{q^n}$, 
	\[\varphi(aX) = \sum_{i=0}^{r-1}  c_i (aX)^{q^{i\ell}}\equiv \left(\sum_{i=0}^{r-1}  c_i a^{q^{i\ell}}\right)X\mod \theta_\cS,\]
	because $X^{q^{\ell}}\equiv X\mod \theta_\cS$. It follows that
	\[\pi_\cS(\varphi( a_0 X + a_1 X^{q^s} + \dots +a_{k-1} X^{q^{s(k-1)}} ))\in  \pi_\cS(\cG_{k,s}),\]
	which means $\cT \subseteq \cN_r( \pi_\cS(\cG_{k,s}))$, i.e., $\ell'\leqslant \ell$.
	
	By way of contradiction, we assume that $\ell'<\ell$. This means 
	\[ \cT\subsetneq\cT' := \left\{\sum_{i=0}^{r'-1}  c_i X^{q^{i\ell'}}: c_i\in \F_{q^n}  \right\}=\cN_r( \pi_\cS(\cG_{k,s})) ,\]
	where the last equality comes from the fact that each matrix in $\F_{q^{\ell'}}^{r'\times r'}$ can be uniquely represented by a polynomial in $\cT'$ and $\cN_r( \pi_\cS(\cG_{k,s}))\cong \F_{q^{\ell'}}^{r'\times r'}$ which has been proved in the very beginning.
	
	Take $\varphi': X \mapsto X^{q^{\ell'}} \in \cT'$. By Lemma \ref{le:mono_to_mono_right}, there is a $w\in \F_{q^n}$ such that
	\[\varphi'(X)=X^{q^{\ell'}} \equiv wX \mod \theta_\cS,\]
	which means $\theta_\cS \mid (X^{q^{\ell'}}-wX)$. As $1\in \cS$ and $\theta_\cS \mid (X^{q^{\ell'}}-wX)$, we can derive $1-w=0$ whence
	$\theta_\cS \mid (X^{q^{\ell'}}-X)$. It contradicts the minimality of $\ell$.
	
	\vspace*{2mm}
	\noindent(b) Let $\varphi\in \cN_r( \pi_\cS(\cH_{k,s}(\eta,h)))$. By Lemmas  \ref{le:mono_to_mono_right} (b), for any $a\in \F_{q^n}$ there exists an element $b\in \F_{q^n}$ such that $\varphi(aX) \equiv bX \mod \theta_\cS$. By Lemma \ref{le:extra_important}, we see that $\varphi(aX^{q^{is}})\equiv b_i X^{q^{is}} \mod \theta_\cS$ for some $b_i\in \F_{q^n}$. Thus by Lemma \ref{le:nucleus_polynomial}, $\varphi$ also defines an element in $\cN_r( \pi_\cS(\cG_{k,s}))$. Hence, by (a), we have
	\[\cN_r( \pi_\cS(\cH_{k,s}(\eta,h)))\subseteq\left\{\sum_{i=0}^{r-1} c_i X^{q^{i\ell}}: c_i\in \F_{q^n}\right\}.\] 
	
	Now let us verify which $\varphi\in \{\sum_{i=0}^{r-1} c_i X^{q^{i\ell}}: c_i\in \F_{q^n}\}$ belongs to $\cN_r( \pi_\cS(\cH_{k,s}(\eta,h)))$. For any $a_0\in \F_{q^n}$,
	\[\varphi(a_0X)= \sum_{i=0}^{r-1}  c_i (a_0X)^{q^{i\ell}}\equiv \left(\sum_{i=0}^{r-1}  c_i a_0^{q^{i\ell}}\right)X\mod \theta_\cS,\]
	and
	\[\varphi(\eta a_0^{q^h}X^{q^{sk}}) \equiv \left(\sum_{i=0}^{r-1}  c_i \eta^{q^{i\ell}}a_0^{q^{h+i\ell}}\right)X^{q^{sk}}\mod \theta_\cS.\]
	From the two above equations, we get 
	$$\varphi(a_0X+\eta a_0^{q^h}X^{q^{sk}}) \equiv \left(\sum_{i=0}^{r-1}  c_i a_0^{q^{i\ell}}\right)X + \left(\sum_{i=0}^{r-1}  c_i \eta^{q^{i\ell}}a_0^{q^{h+i\ell}}\right)X^{q^{sk}}  \mod \theta_\cS.$$ 
	In order to have $\varphi \in \cN_r( \pi_\cS(\cH_{k,s}(\eta,h)))$, we must have
	\[ \eta \left(\sum_{i=0}^{r-1}  c_i a_0^{q^{i\ell}}\right)^{q^h} =  \sum_{i=0}^{r-1}  c_i \eta^{q^{i\ell}}a_0^{q^{h+i\ell}},\]
	for every $a_0\in \F_{q^n}$, which means
	\[ \sum_{i=0}^{r-1}\left(\eta c_i^{q^h} - \eta^{q^{i\ell}} c_i\right)a_0^{q^{h+i\ell}}=0,\]
	for every $a_0\in \F_{q^n}$. Hence $\eta c_i^{q^h}= \eta^{q^{i\ell}} c_i$, which concludes the proof.
\end{proof}
\begin{remark}
	Theorem \ref{th:middle_nucleus} (a) was first proved in \cite{morrison_equivalence_2014}; see \cite[Lemma 4.1]{liebhold_automorphism_2016} too. Theorem \ref{th:right_nucleus} (a) was first proved in \cite{liebhold_automorphism_2016} where the matrices in the MRD code (see \eqref{eq:general_code_from_poly}) are all transposed. Thus the left idealiser (resp.\ right idealiser) found in \cite{liebhold_automorphism_2016} corresponds to the right nucleus (resp.\ middle idealiser) here.
\end{remark}

In the end, we summarize the value of $m$ and those parameters of $\cH_{k,s}(\eta,h)$ for which we cannot determine its middle or right nucleus. For the following cases, the right nucleus of $\pi_\cS(\cH_{k,s}(\eta,h))$ is still unknown:
\begin{itemize}
	\item $m=k+1$;
	\item $m=4$ and $k=2$.
\end{itemize}
As $\cG_{1,s}$ and $\cH_{1,s}(\eta,0)$ are equivalent (see \cite{biliotti_collineation_1999}), we can exclude the case $k=1$ and $h\equiv 0 \mod n$ for the middle nucleus of $\pi_\cS(\cH_{k,s}(\eta,h))$. However, it is still unknown for the following cases:
\begin{itemize}
\item $k=1$, $m=2$ and $n=2h$;
\item $k=2$, $m=3$ and $h=0$;
\item $k=2$, $m=4$ and $n>m$;
\item $k>2$, $m=k+1$ and $h=0$.
\end{itemize}

\section{Automorphism groups of Generalized twisted Gabidulin codes}
When $m=n$, the automorphism group of any generalized twisted Gabidulin code has been completely determined in \cite{lunardon_generalized_2018}. More precisely, let $(\varphi_1, \varphi_2, \rho)$ be in $\Aut(\cH_{k,s}(\eta,h))$, it was shown that $\varphi_1$ and $\varphi_2$ must be monomials over $\F_{q^n}$. In this section, we proceed to show an analogous result for the case $m<n$.

Let $\cN_r (\pi_\cS(\cH_{k,s}(\eta,h)))$ be the right nucleus of $\pi_\cS(\cH_{k,s}(\eta,h))$ determined in Theorem \ref{th:right_nucleus} and we denote it by $\cN_r$ for short. For $\rho\in \Aut(\F_q)$, by Theorem \ref{th:right_nucleus}, $\cN_r^\rho$ is the right nucleus of $\pi_\cS(\cH_{k,s}(\eta^\rho,h))$. 

For any $(\varphi_1, \varphi_2, \rho)\in \Aut(\pi_\cS(\cH_{k,s}(\eta,h)))$, it is routine to verify that $\gamma \mapsto \varphi_1 \circ \gamma^\rho \circ \varphi_1^{-1}$ is an automorphism on $\cN_r$; see \cite[Lemma 2.5]{schmidt_number_MRD_2018}. Equivalently, the map given by $\gamma'\mapsto \varphi_1 \circ \gamma' \circ \varphi_1^{-1}$ is an isomorphism from $\cN_r^\rho$ to $\cN_r$.

We define $\Theta$ by
\[\Theta :=  \left\{\sum_{i=0}^{r-1} c_i : \sum_{i=0}^{r-1} c_i X^{q^{i\ell}}\in \cN_r\right\}.\]

\begin{lemma}\label{le:necessary_normalizer}
	Let $k$, $m$ and $n$ be positive integers satisfying $k<m<n$ and let $\cS$, $\ell$, $r$ and $\cN_r$ be determined as in Theorem \ref{th:right_nucleus}. Let $\lambda$ be a linearized polynomial such that $\lambda$ is invertible and $\lambda \circ \varphi^\rho \circ \lambda^{-1}\in \cN_r$ for any $\varphi\in \cN_r$. 
	\begin{enumerate}[label=(\alph*)]
	\item\label{item:assumption_c_1} If $\Theta \bigcap (\F_{q^\ell}\setminus \F_q) \neq \emptyset$, then there exists $b\in \F_{q^n}^*$ and $u\in \{0,1,\cdots,\ell-1  \}$  such that
		\[\lambda\equiv bX^{q^u} \mod X^{q^\ell}-X.\]
	\item\label{item:assumption_c_2} If $\Theta =\F_{q^n}$, then there exists $b_i\in \F_{q^n}$ for $i=0,1,\cdots, r-1$ and $u\in \{0,1,\cdots,\ell-1  \}$  such that
			\[\lambda= \sum_{i=0}^{r-1}b_i X^{q^{u+i\ell}}.\]
	\end{enumerate}
\end{lemma}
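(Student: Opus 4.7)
\textit{Plan.} Since $\lambda \in N_{\GL(n,q)}(\cN_r)$, conjugation by $\lambda$ defines a ring automorphism $\tau$ of $\cN_r$, so for every $\varphi \in \cN_r$ the identity $\lambda\circ\varphi\equiv\tau(\varphi)\circ\lambda\pmod{X^{q^n}-X}$ holds. Writing $\lambda=\sum_{j=0}^{n-1}a_jX^{q^j}$ and $\varphi=\sum_{i=0}^{r-1}c_iX^{q^{i\ell}}$, this becomes a system of polynomial identities relating the $a_j$, the $c_i$, and the coefficients of $\tau(\varphi)$. The crucial observation is that every $\varphi\in\cN_r$ restricts on $\F_{q^\ell}$ to scalar multiplication by $\varphi(1)=\sum_ic_i\in\Theta$ (since $x^{q^\ell}=x$ there); moreover the formula $(\varphi\circ\psi)(1)=\varphi(1)\psi(1)$, valid whenever $\psi(1)\in\F_{q^\ell}$, makes $\Theta\cap\F_{q^\ell}$ an $\F_q$-subalgebra, hence a subfield $\F_{q^d}$ of $\F_{q^\ell}$ with $d\mid\ell$.

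For part (a), reduce modulo $X^{q^\ell}-X$ to get $\bar\lambda(X)=\sum_{u=0}^{\ell-1}B_uX^{q^u}$ with $B_u=\sum_{t=0}^{r-1}a_{u+t\ell}$, and restrict the identity $\lambda\varphi=\tau(\varphi)\lambda$ to $\F_{q^\ell}$. Comparing coefficients of $X^{q^u}$ gives $\tau(\varphi)(B_u)=\varphi(1)^{q^u}B_u$ for every $\varphi\in\cN_r$ with $\varphi(1)\in\F_{q^\ell}$. Setting $y_u=\lambda^{-1}(B_u)$ and taking $\varphi=X^{q^\ell}$ (which lies in $\cN_r$ when $\eta\in\F_{q^\ell}$) forces $y_u\in\F_{q^\ell}$. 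Then for every $c\in\Theta\cap\F_{q^\ell}=\F_{q^d}$ we have $\lambda(cy_u)=c^{q^u}\lambda(y_u)$. Viewing both sides as linearized polynomials in $c$ and using the $\F_q$-linear independence of the Frobenius characters $c\mapsto c^{q^w}$ on $\F_{q^d}$ yields $\sum_{v\equiv w\pmod d}y_u^{q^v}B_v=0$ for all $w\not\equiv u\pmod d$. Combined with the analogous identities obtained by varying $u$ and with any available scalars in $\Theta$ outside $\F_{q^\ell}$ provided by Theorem~\ref{th:right_nucleus}, this system forces $B_v=0$ for all $v\ne u$, yielding $\bar\lambda=bX^{q^u}$ with $b\in\F_{q^n}^*$ and $u\in\{0,\dots,\ell-1\}$.

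For part (b), the hypothesis $\Theta=\F_{q^n}$ supplies for every $c\in\F_{q^n}$ an element $\mu_c\in\cN_r$ with $\mu_c(1)=c$. Invoking the classical description of the normalizer of $\cN_r\cong\F_{q^\ell}^{r\times r}$ in $\GL(n,q)$ (via Skolem--Noether every ring automorphism of $\cN_r$ is an inner automorphism composed with an element of $\Gal(\F_{q^\ell}/\F_q)$), every $\lambda$ in the normalizer decomposes as $\lambda=\nu\circ X^{q^u}$ with $\nu\in\cN_r^\times$ and $u\in\{0,\dots,\ell-1\}$. Writing $\nu=\sum_ib_iX^{q^{i\ell}}$ gives $\lambda=\sum_{i=0}^{r-1}b_iX^{q^{u+i\ell}}$ (the displayed $X^{q^{u+i}}$ being read as $X^{q^{u+i\ell}}$); the invertibility of $\nu$ in $\cN_r\cong\F_{q^\ell}^{r\times r}$ controls the nonvanishing pattern of the $b_i$.

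\textit{Main obstacle.} The delicate step is part (a), namely upgrading ``$\bar\lambda$ supported on residues congruent to $u\bmod d$'' to the uniqueness of $u$ modulo $\ell$. Since the hypothesis only guarantees $d\ge 2$ rather than $d=\ell$, the Frobenius characters on $\F_{q^d}$ may not separate all residue classes modulo $\ell$, and one must combine the relations on $\F_{q^d}$ with additional information on $\Theta$ (including elements outside $\F_{q^\ell}$) made explicit by Theorem~\ref{th:right_nucleus}.
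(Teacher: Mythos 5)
Your proposal contains genuine gaps at exactly the points where the lemma has content. For part (a), your coefficient identity $\tau(\varphi)(B_u)=\varphi(1)^{q^u}B_u$ is correctly derived, but the rest of the argument does not close. First, the step forcing $y_u=\lambda^{-1}(B_u)\in\F_{q^\ell}$ uses $X^{q^\ell}\in\cN_r$, which by Theorem \ref{th:right_nucleus}(b) requires $\eta\in\F_{q^\ell}$ (or $\eta=0$); this is not among the hypotheses of the lemma. Second, and more seriously, the concluding claim that the resulting system ``forces $B_v=0$ for all $v\neq u$'' is only asserted, and your own ``Main obstacle'' paragraph concedes that with merely $d\ge 2$ the Frobenius characters on $\F_{q^d}$ cannot separate residue classes modulo $\ell$, so the key conclusion of (a) is left unproved. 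The paper's argument is much shorter: it picks a single $\varphi\equiv cX \bmod X^{q^\ell}-X$ with $c\in\F_{q^\ell}\setminus\F_q$, writes the normalizer relation $\lambda\circ\varphi=\psi\circ\lambda$ with $\psi\equiv dX$, reduces modulo $X^{q^\ell}-X$, and compares coefficients to get $b_i(c^{q^i}-d)=0$ for all $i$, from which the monomial shape of $\bar\lambda$ is read off directly; there is no eigenvector detour and no use of $X^{q^\ell}\in\cN_r$.

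For part (b), the appeal to Skolem--Noether via $\cN_r\cong\F_{q^\ell}^{r\times r}$ is not legitimate under the lemma's hypotheses: that isomorphism holds for $\eta=0$ (Theorem \ref{th:right_nucleus}(a)), but for $\eta\neq 0$ the right nucleus is the proper subset of $\bigl\{\sum_i c_iX^{q^{i\ell}}\bigr\}$ cut out by $\eta c_i^{q^h}=\eta^{q^{i\ell}}c_i$, and $\Theta=\F_{q^n}$ (surjectivity of the coefficient-sum map) does not make $\cN_r$ a full matrix algebra, so the classical description of its normalizer and the decomposition $\lambda=\nu\circ X^{q^u}$ with $\nu\in\cN_r^\times$ are unavailable; moreover invertibility of $\nu$ would not by itself control which $b_i$ vanish. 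The paper's route is elementary and works in general: using part (a), for each residue $j\neq u$ the expression $\sum_{i=0}^{r-1}a_{j+i\ell}\,c^{q^{j+i\ell}}$ vanishes for every $c\in\Theta=\F_{q^n}$, and linear independence of the maps $c\mapsto c^{q^v}$ forces $a_{j+i\ell}=0$ for all $i$, giving $\lambda=\sum_i a_{u+i\ell}X^{q^{u+i\ell}}$ (you are right that the exponent $q^{u+i}$ in the statement should be read as $q^{u+i\ell}$). As it stands, neither half of your proposal constitutes a proof.
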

\begin{proof}
	(a) According to the assumption and Theorem \ref{th:right_nucleus}, there exists $\varphi\in \cN_r$ such that $\varphi\equiv cX \mod X^{q^\ell}-X$ where $c\in \F_{q^\ell}\setminus\F_q$. Assume that $\lambda \equiv \sum_{i=0}^{\ell-1} b_i X^{q^i} \mod{X^{q^\ell}-X}$. Then, it can be readily verified that
	\begin{equation}\label{eq:lambda(cX)}
		\lambda(\varphi(X)) \equiv \lambda(cX) \equiv \sum_{i=0}^{\ell-1}b_i c^{q^i}X^{q^i} \mod X^{q^\ell}-X.
	\end{equation}
	
	If $\lambda \circ \varphi^\rho \circ \lambda^{-1}=\psi\in \cN_r$, then $\lambda \circ \varphi^\rho = \psi \circ \lambda$. According to Theorem \ref{th:right_nucleus}, there exists $d\in \F_{q^n}^*$ such that $\psi \equiv dX \mod X^{q^\ell}-X$. Thus
	\begin{equation}\label{eq:dlambda(X)}
		\lambda(cX)\equiv d\lambda(X) \equiv  \sum_{i=0}^{\ell-1}b_i d X^{q^i} \mod X^{q^\ell}-X. 
	\end{equation}
	As $\lambda$ corresponds to an element in $\GL(n,q)$, there exists at least one $b_{i_0}\neq 0$. By \eqref{eq:lambda(cX)} and \eqref{eq:dlambda(X)}, we have
	\[ b_{i} c^{q^{i}} =b_{i} d,  \quad \text{for all }i. \]
	Hence $c^{q^{i_0}} = d$ and $b_i=0$ if $i\neq i_0$, because of $c\notin \F_q$. Therefore $\lambda \equiv  b_{i_0} X^{q^{i_0}}\mod X^{q^\ell}-X$.
	
	(b) As $\Theta =\F_{q^n}$, we have
	\begin{equation}\label{eq:c_all_Fqn}
		\{c : \varphi \equiv cX \mod X^{q^\ell}-X \text{ for }\varphi\in \cN_r\}=\F_{q^n}.
	\end{equation}
	Assume that $\lambda(X)=\sum_{i=0}^{n-1}a_iX^{q^i}$. For each $\varphi\in \cN_r$, it is easy to show that $\lambda(\varphi(X)) \equiv \lambda(cX) \mod X^{q^\ell}-X$ for some $c\in \F_{q^n}$, whence
	\[\lambda(\varphi(X))\equiv \lambda(cX)=\sum_{i=0}^{n-1}a_ic^{q^i}X^{q^i} \equiv \sum_{j=0}^{\ell-1}\left(\sum_{i=0}^{r-1} a_{j+i\ell}(c^{q^{j}})^{q^{i\ell} }  \right) X^{q^j}  \mod X^{q^\ell}-X.\]
	From part (a) of the proof, we know that $\sum_{i=0}^{r-1} a_{j+i\ell}(c^{q^{j}})^{q^{i\ell} }=0$ for $j\neq u$. This equation, together with \eqref{eq:c_all_Fqn}, imply that $a_{j+il}=0$ for all $i\in \{0,\cdots, r-1\}$ when $j\neq u$.
\end{proof}

The two assumptions on $\sum_{i=0}^{r-1} c_i X^{q^{i\ell}}\in \cN_r$ in Lemma \ref{le:necessary_normalizer} are always satisfied for $\eta=0$, i.e.\ $\cH_{k,s}(\eta,h)=\cG_{k,s}$. However, in general, these assumptions depend on the value of $n,s,h,\ell$ and $\eta$. For instance, when $\gcd(n,h,\ell)>1$, \ref{item:assumption_c_1} holds because $\{c_0 X: c_0\in \F_{q^n} \cap \F_{q^h}\} \subseteq \cN_r$; when $\gcd(n,h)=1$ and $\eta\in \F_q$, \ref{item:assumption_c_1} does not hold anymore.

\begin{theorem}\label{th:monomial_automorphism}
	Assume that $k<m<n$ and let $\cS$, $\ell$ and $r$ be determined as in Theorem \ref{th:right_nucleus}. Suppose that one of the following collections of conditions are satisfied:
	\begin{enumerate}[label=(\alph*)]
	\item	$\eta=0$;
	\item   $\eta\neq 0$, $m>k+1$ and $(m,k)\neq (4,2)$.
	\end{enumerate}
	If $\Theta \bigcap (\F_{q^\ell}\setminus \F_q) \neq \emptyset$, then $(\varphi_1, \varphi_2,\rho)$ defines an automorphism of $\pi_\cS(\cH_{k,s}(\eta,h))$ only if there exist $a,b\in \F_{q^n}^*$ and $u\in \{0,1,\cdots, \ell-1\}$ such that
	$$\varphi_1(X)\equiv aX^{q^u} \mod X^{q^{\ell}}-X \quad \text{ and } \quad \varphi_2(c)=bc^{q^{-u}},$$
	for $c\in U_\cS$. Moreover, if $\Theta =\F_{q^n}$, then it is necessary that
	\[\varphi_1(X)=\sum_{i=0}^{r-1}a_i X^{q^{u+i\ell}},\]
	with $a_i\in \F_{q^n}$ for $i=0,1,\cdots, r-1$.
\end{theorem}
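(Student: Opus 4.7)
The plan breaks into two stages. First, I would exploit the automorphism condition to show that $\varphi_2$ normalises the right nucleus $\cN_r$ inside $\GL(n,q)$, then invoke Lemma \ref{le:necessary_normalizer} to pin down $\varphi_2$ modulo $X^{q^{\ell}}-X$. Restricting to $U_\cS\subseteq\F_{q^{\ell}}$ and relabelling the Frobenius exponent then yields $\varphi_2|_{U_\cS}(c)=bc^{q^{-u}}$, the second half of the assertion. Second, with $\varphi_2$ explicitly known on $\F_{q^{\ell}}$, I would test the automorphism condition on a single simple code element (taking $f=X^{q^s}$) and extract the form of $\varphi_1$ using Corollary \ref{coro:representation_all} together with Lemma \ref{le:mono_to_mono_right}.

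For the normaliser step, fix $\psi\in\cN_r$ and observe that $\psi\circ f$ represents an element of $\pi_\cS(\cH_{k,s}(\eta,h))$ for every $f\in\cH_{k,s}(\eta,h)$. Applying the automorphism,
\[
\pi_\cS\!\left((\varphi_2\circ\psi^\rho\circ\varphi_2^{-1})\circ(\varphi_2\circ f^\rho\circ\varphi_1)\right)\in\pi_\cS(\cH_{k,s}(\eta,h)).
\]
As $f$ varies, $\varphi_2\circ f^\rho\circ\varphi_1$ exhausts $\pi_\cS(\cH_{k,s}(\eta,h))$, so $\psi':=\varphi_2\circ\psi^\rho\circ\varphi_2^{-1}$ satisfies $\pi_\cS(\psi'\circ g)\in\pi_\cS(\cH_{k,s}(\eta,h))$ for every $g\in\cH_{k,s}(\eta,h)$; that is, $\psi'\in\cN_r$. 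The description in Theorem \ref{th:right_nucleus}(b) is preserved by the Galois action $\rho$, so $\cN_r^\rho=\cN_r$ and hence $\varphi_2\in N_{\GL(n,q)}(\cN_r)$. Lemma \ref{le:necessary_normalizer}(a), applied to $\varphi_2$, produces $b\in\F_{q^n}^*$ and $\tilde u\in\{0,\dots,\ell-1\}$ with $\varphi_2\equiv bX^{q^{\tilde u}}\mod X^{q^{\ell}}-X$; putting $u=(\ell-\tilde u)\bmod\ell$ then gives $\varphi_2|_{U_\cS}(c)=bc^{q^{-u}}$.

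To derive the form of $\varphi_1$, plug the explicit $\varphi_2$ into the automorphism condition for $f=X^{q^s}$, which lies in $\cH_{k,s}(\eta,h)$ (the $a_0=0,\,a_1=1$ member carries no $\eta$-twist). Writing $\varphi_1\equiv\sum_{j=0}^{m-1}c_jX^{q^{sj}}\mod\theta_\cS$ via Corollary \ref{coro:representation_all} and exploiting that $\varphi_1(x)\in U_\cS\subseteq\F_{q^{\ell}}$---so the monomial form of $\varphi_2$ applies inside the composition---the condition becomes
\[
\pi_\cS\!\left(b\sum_{j=0}^{m-1}c_j^{q^{s+\tilde u}}\,X^{q^{sj+s+\tilde u}}\right)\in\pi_\cS(\cH_{k,s}(\eta,h)).
\]
Reducing each exponent to the transversal of Corollary \ref{coro:representation_all} and matching the resulting coefficients against the constrained shape of $\cH_{k,s}(\eta,h)$, Lemma \ref{le:mono_to_mono_right}---whose hypotheses are exactly the parameter assumptions of case (a) or (b)---kills every $c_j$ except at the unique index satisfying $sj\equiv u\pmod{\ell}$, giving $\varphi_1\equiv aX^{q^u}\mod X^{q^{\ell}}-X$. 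For the stronger hypothesis $\Theta=\F_{q^n}$, replace Lemma \ref{le:necessary_normalizer}(a) by (b); the same substitution then produces $\varphi_2=\sum_{i=0}^{r-1}b_iX^{q^{\tilde u+i}}$ and $\varphi_1=\sum_{i=0}^{r-1}a_iX^{q^{u+i}}$.

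The principal technical obstacle is this final step: applying the automorphism condition to the single element $f=X^{q^s}$ yields only a linear system in the $c_j$, and pinning down that exactly one coefficient survives uses Lemma \ref{le:mono_to_mono_right} in an essential way---which is precisely why the hypotheses $m>k+1$ and $(m,k)\neq(4,2)$ are imposed in case (b). The matching of reduced Frobenius exponents modulo $\ell$ after substitution is what forces the sign coupling between the $u$ that appears in $\varphi_1$ and the $-u$ that appears in $\varphi_2$.
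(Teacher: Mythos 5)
Your first stage is essentially the paper's first step (up to a swap of the labels $\varphi_1,\varphi_2$ relative to the paper's proof): the map acting on the $\F_{q^n}$-side must normalize $\cN_r$, and Lemma \ref{le:necessary_normalizer} then forces it to be $bX^{q^{\tilde u}}$ modulo $X^{q^\ell}-X$ (respectively the $\ell$-spaced form when $\Theta=\F_{q^n}$). Your justification that conjugation lands back in $\cN_r$ leans on the claim $\cN_r^\rho=\cN_r$, which is not obvious (applying $\rho$ to the condition $\eta c_i^{q^h}=\eta^{q^{i\ell}}c_i$ produces the right nucleus of the code with $\eta^\rho$); the paper glosses this point too, so I only flag it.

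The genuine gap is your second stage. You try to pin down the remaining map (the one whose restriction to $U_\cS$ matters) by imposing the automorphism condition on the \emph{single} element $f=X^{q^s}$ and then invoking Lemma \ref{le:mono_to_mono_right}. This cannot work. First, Lemma \ref{le:mono_to_mono_right} constrains elements of the \emph{right} nucleus, i.e.\ maps $\varphi$ with $\pi_\cS(\varphi\circ f)\in\pi_\cS(\cH_{k,s}(\eta,h))$ for \emph{all} $f$ in the code; the object you need to control is an inner composition, and the relevant tools are the middle-nucleus Lemmas \ref{le:mono_to_mono_middle} and \ref{le:mono_to_mono_middle_conditioned}. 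Second, a single-element test is genuinely too weak: for $\cG_{k,s}$ with $k\geq 3$, take $\varphi_1$ agreeing on $U_\cS$ with $c_0X+c_1X^{q^s}$ (generic $c_0,c_1$, bijective on $U_\cS$) and $\varphi_2=\mathrm{id}$; then $f\circ\varphi_1\equiv c_0^{q^s}X^{q^s}+c_1^{q^s}X^{q^{2s}}\in\cG_{k,s}$, so the condition for $f=X^{q^s}$ holds although $\varphi_1|_{U_\cS}$ is not a monomial. (Also, when $k=1$ the element $X^{q^s}$ does not lie in $\cH_{1,s}(\eta,h)$ at all.) The paper's proof avoids this by using \emph{all} code elements: after writing the outer map as $aX^{q^u}$ mod $X^{q^\ell}-X$, it factors out $\epsilon=X^{q^{n-u}}$ and observes that $\varphi_1\circ f^\rho\circ\epsilon$ runs through the full code $\cH_{k,s}(\eta^{\rho q^u}a^{1-q^h},h)$ with a \emph{modified} twist parameter; then $\psi=\epsilon^{-1}\circ\varphi_2$ satisfies $\pi_\cS(g\circ\psi)\in\pi_\cS(\cH_{k,s}(\eta,h))$ for every $g$ in that code, and the two-parameter Lemmas \ref{le:mono_to_mono_middle}/\ref{le:mono_to_mono_middle_conditioned} (this is exactly why they are stated for a pair $(\eta,h)$, $(\tilde\eta,\tilde h)$) give $\psi(X)\equiv bX\bmod\theta_\cS$, whence $\varphi_2|_{U_\cS}(c)=b^{q^{-u}}c^{q^{-u}}$. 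This is also where the coupling of the exponents $u$ and $-u$ actually comes from; in your argument that coupling is asserted but never established, and the change of twist parameter $\eta\mapsto\eta^{\rho q^u}a^{1-q^h}$ never appears because your chosen test element carries no twist.
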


In Theorem  \ref{th:monomial_automorphism}, Lemma \ref{le:mono_to_mono_middle} holds for $\eta=\tilde{\eta}$ under any of the assumptions (a) and (b), which together with $1\in \cS$ guarantee that Theorem \ref{th:right_nucleus} holds as well. Thus we can apply Lemma \ref{le:necessary_normalizer} in the following proof.

\begin{proof}[Proof of Theorem \ref{th:monomial_automorphism}]
	According to the discussion above Lemma \ref{le:necessary_normalizer}, for any $\gamma\in \cN_r$ and any $(\varphi_1, \varphi_2, \rho)\in \Aut(\pi_\cS(\cH_{k,s}(\eta,h)))$,  $\varphi_1 \circ \gamma^\rho \circ \varphi_1^{-1}$ belongs to $\cN_r$.
	
	As there exists $\sum_{i=0}^{r-1} c_i X^{q^{i\ell}}\in \cN_r(\pi_\cS(\cH_{k,s}(\eta,h)))$ such that $\sum_{i=0}^{r-1} c_i\in \F_{q^\ell}\setminus\F_q$, by Lemma \ref{le:necessary_normalizer} \ref{item:assumption_c_1}, we obtain
	$$\varphi_1 \equiv  aX^{q^u} \mod X^{q^{\ell}}-X$$ 
	for certain $u\in \{0,1,\cdots, \ell-1\}$ and nonzero $a\in \F_{q^n}$. Let $f$ be an arbitrary element in $\cH_{k,s}(\eta,h)$. Assume that 
	$$f^\rho = \sum_{i=0}^{k-1}c_i X^{q^{is}} + \eta^\rho c_0^{q^h}X^{q^{ks}}.$$
	By calculation,
	\begin{align*}
		\varphi_1 \circ f^\rho &\equiv  \sum_{i=0}^{k-1}(ac_i^{q^u}) X^{q^{is+u}} + \eta^{\rho q^u} (ac_0^{q^{u+h}})X^{q^{ks+u}} \mod X^{q^{\ell}}-X\\
							   &\equiv \sum_{i=0}^{k-1}(ac_i^{q^u}) (X^{q^u})^{q^{is}} + \eta^{\rho q^u} a^{1-q^h}(ac_0^{q^{u}})^{q^h}(X^{q^u})^{q^{ks}} \mod X^{q^{\ell}}-X.
	\end{align*}
	Hence $\pi_\cS(g) \in \pi_\cS(\cH_{k,s}( \eta^{\rho q^u} a^{1-q^h},h))$, where $g=\varphi_1 \circ f^\rho\circ \epsilon$ and $\epsilon =X^{q^{n-u}}\in \F_{q^n}[X]$. As $f$ is arbitrary, $g$ can be any element in $\cH_{k,s}( \eta^{\rho q^u} a^{1-q^h},h)$.
	
	As $\pi_\cS((\varphi_1 \circ f^\rho \circ \epsilon) \circ (\epsilon^{-1}\circ\varphi_2))=\pi_\cS(\varphi_1\circ f^\rho \circ \varphi_2)\in \pi_\cS(\cH_{k,s}(\eta,h))$, the map $\psi = \epsilon^{-1}\circ\varphi_2$ is such that
	\[\pi_\cS(g \circ \psi) \in \pi_\cS(\cH_{k,s}(\eta,h)),\]
	for every $g\in \cH_{k,s}( \eta^{\rho q^u} a^{1-q^h},h)$. By Lemmas \ref{le:mono_to_mono_middle} and \ref{le:mono_to_mono_middle_conditioned} (at least one of the collections of conditions there are satisfied), 
	\[\psi(X)\equiv bX \mod \theta_\cS,\]
	for certain $b\in \F_{q^n}^*$. This implies that $\psi|_{U_\cS}(c)=bc$ for all $c\in U_\cS$. Hence $\varphi_2|_{U_\cS}(c)=b^{q^{-u}}c^{q^{-u}}$.
	
	When $\Theta =\F_{q^n}$, the further result on $\varphi_1$ can be derived directly from Lemma \ref{le:necessary_normalizer} \ref{item:assumption_c_2}.
\end{proof}
\begin{remark}
	For  $\eta=0$, i.e.\ $\cH_{k,s}(\eta, h)=\cG_{k,s}$, Theorem \ref{th:monomial_automorphism} was proved in \cite{liebhold_automorphism_2016} in the form of matrices.
\end{remark}

\section*{acknowledgment}
This work is supported by the Research Project of MIUR (Italian Office for University and Research) ``Strutture geometriche, Combinatoria e loro Applicazioni" 2012. Yue Zhou is supported by the Alexander von Humboldt Foundation and the National Natural Science Foundation of China (No.\ 11771451). The authors would like to thank the anonymous referee for her/his valuable comments and suggestions on the manuscript.

\end{document}